\documentclass[11pt,reqno,a4paper]{article}
\usepackage{comment}

\usepackage{euscript}
\usepackage{amsmath,amssymb,amsfonts,amsthm}
\usepackage{mathtools}
\usepackage{fullpage}
\usepackage{xcolor}
\usepackage{dsfont}
\usepackage[colorlinks=true, linkcolor=blue]{hyperref}
\usepackage{sectsty}

\newtheoremstyle{special}%
{}%
{}%
{}%
{}%
{\scshape}%
{.}%
{.5em}%
{}

\newtheorem{maintheorem}{Theorem}
\newtheorem{theorem}{Theorem}

\newtheorem{lemma}[theorem]{Lemma}

\newtheorem{definition}[theorem]{Definition}

\theoremstyle{special}
\newtheorem{remark}[theorem]{Remark}

\allsectionsfont{\centering\mdseries\normalsize\scshape}

\def\N{\mathbb{N}}

\def\R{\mathbb{R}}
\def\B{\mathcal{B}}

\def\om{\omega}

\DeclareMathOperator{\var}{Var}

\DeclareMathOperator{\essinf}{essinf}

\DeclareMathOperator{\Lip}{Lip}
\DeclareMathOperator{\dist}{dist}

\author{D. Dragi\v cevi\' c \footnote{Faculty of Mathematics, University of Rijeka, Rijeka, Croatia. {\tt E-mail: ddragicevic@math.uniri.hr}.}\and 
	Y. Hafouta\footnote{Department of Mathematics, Ben-Gurion University, Israel and Department of Mathematics, The University of Florida, USA. {\tt E-mail: yeor.hafouta@mail.huji.ac.il}.}}

\begin{document}

\title{Quenched and annealed linear response for some partially hyperbolic skew products}
\maketitle

\begin{abstract}
We prove quenched and annealed statistical stability, linear response, and differentiability of asymptotic moments for parametric families of partially hyperbolic skew products, with random hyperbolic maps on the fibers. 
The main novelty is that the base maps also depend on the parameter, which leads to different formulas in the linear response and the derivative of the asymptotic moments with respect to the parameter. Our annealed results apply to partially hyperbolic maps that are not covered in \cite{BashCastro26,Dol,DS}. 
\end{abstract}

\section{Introduction}
Let $X$ be a topological space and $(\tau_\epsilon)_{\epsilon \in I}$ a family of sufficiently regular maps $\tau_{\epsilon}\colon X\to X$, where $I\subset\R$ such that $0\in I$ is an accumulation point. Here, we  view $\tau_\epsilon$ as a ``sufficiently small'' perturbation of $\tau_0$. Suppose that for each $\epsilon \in I$, $\tau_\epsilon$ admits a unique physical measure $\mu_\epsilon$. The problem of linear response is concerned with the regularity of the map $\epsilon \to \mu_\epsilon$ at $0$. More precisely, let $\mathcal O$ be a ``large'' class consisting of real-valued observables $\varphi \colon X\to \mathbb R$. We say that a family $(\tau_\epsilon)_{\epsilon \in I}$ exhibits:
\begin{itemize}
\item \emph{statistical stability} if the map $\epsilon \to \int_X \varphi \, d\mu_\epsilon$ is continuous at $0$ for each $\varphi \in \mathcal O$;
\item \emph{linear response} if the map $\epsilon \to \int_X \varphi\, d\mu_\epsilon$ is differentiable at $0$ for each $\varphi \in \mathcal O$.
\end{itemize}
We note that in some cases the measures $\mu_\epsilon$ can be identified as elements of a certain Banach space $\B$, and in that case it makes sense to ask whether the map $\epsilon \mapsto \mu_\epsilon$ is continuous or differentiable in $0$ when viewed as a map from $I$ to $\mathcal B$.

The question of statistical stability is motivated by physics, where one only observes a small perturbation $\tau_\epsilon$ of the ideal system $\tau_0$, and it is important that the physical measure $\mu_\epsilon$ is continuous in $\epsilon$. The same reasoning applies to differentiability, which  is also important
in averaging, rigidity  and statistical physics (see \cite{[8],[31],[35],[50]}). 
 We also refer to \cite[Corollary 2.2]{Dol} for applications of linear response to certain refined versions of the mean ergodic theorem and the central limit theorem.

The literature dealing with  linear response is vast.  More precisely,  linear response (or the lack of it) has been discussed for smooth expanding 
systems~\cite{B2,Baladibook,S}, piecewise expanding maps of the interval \cite{B1,BS1}, unimodal maps~\cite{BS2, LS}, intermittent maps \cite{BahSau,BT, L, K}, smooth hyperbolic diffeomorphisms and flows~\cite{BL1, BL2, GL, Ruelle97}, large classes of $C^\infty$ partially hyperbolic systems~\cite{Dol}, systems with cusps~\cite{BG}, as well as discontinuous perturbations of hyperbolic systems~\cite{C}. We refer to~\cite{B2} for a detailed survey of the linear response theory for deterministic dynamical systems which has many interesting applications, for instance to the continuity and differentiability of the variance in the central limit theorem (CLT) for suitable observables (see for example~\cite{bomfim2016, GKLM}).

Let $\Omega$ and $M$ be two bounded Riemannian manifolds and let $X=\Omega\times M$.
In this paper, we consider parametric families of skew products of the form
\[
\tau_\epsilon(\omega, x)=(\sigma_\epsilon \omega, T_{\omega, \epsilon}(x)), \quad (\omega, x)\in \Omega \times M,
\]
where the maps $T_{\om,\epsilon}\colon M\to M$ are hyperbolic maps such that for each $\epsilon$, the family of maps $(T_{\omega, \epsilon})_{\omega \in \Omega}$ admits a family of equivariant physical measures $h_{\om,\epsilon}$, namely $(T_{\om,\epsilon})_*h_{\om,\epsilon}=h_{\sigma_\epsilon\om,\epsilon}$. 
When the maps $T_{\om,\epsilon}$ are expanding, the measures $h_{\om,\epsilon}$ are absolutely continuous, while in the hyperbolic case they are random SRB states in the sense that their random basin has positive Riemannian volume.
Such skew products are classical examples of partially hyperbolic systems, where, in a sense, $\Omega$ plays the role of the central  direction. Throughout this paper, we
 will always assume that $\sigma_\epsilon \om$ is continuous in $(\om,\epsilon)$, $\sigma_0^{-1}$ is Lipschitz, and $\sigma_\epsilon$ preserves a probability measure $\mathbb P_\epsilon$.

In this context, there are two types of statistical stability and a linear response. The first one are the ones presented above with the measure 
$$
\mu_\epsilon=\int h_{\om,\epsilon}\, d\mathbb P_\epsilon(\om)
$$
which is invariant under $\tau_\epsilon$. Such results will be referred to as \textit{annealed statistical stability} and \textit{annealed linear response}. 
The second type is the quenched ones. 
We say that the parameterized family of maps $(T_{\omega, \epsilon})_{\omega \in \Omega}$, $\epsilon \in I$ exhibits:
\begin{itemize}
\item \emph{quenched statistical stability} 
if the map $\epsilon \mapsto \int_M\varphi \, d h_{\om, \epsilon}$ is continuous at $0$, where $\varphi \colon M\to \R$ belongs to a suitable class of observables;
\item \emph{quenched linear response} if the map $\epsilon \mapsto \int_M\varphi \, d h_{\om, \epsilon}$ is differentiable at $0$, where $\varphi \colon M\to \R$ belongs to a suitable class of observables;
\end{itemize}

We first prove quenched statistical stability when the transfer operators of the maps $T_{\om,\epsilon}$ are continuous in the weak sense at points of the form $(\om,0)$.
Under appropriate continuity assumptions on $\epsilon\to \mathbb P_\epsilon$ at $\epsilon=0$ (i.e. statistical stability for $\mathbb P_\epsilon$) we prove annealed statistical stability. For example, we can consider partially hyperbolic base maps $\sigma_\epsilon$ like the ones in \cite{Dol} and SRB measures $\mathbb P_\epsilon$.

Regarding the linear response, the quenched result requires that the transfer operators of $T_{\om,\epsilon}$  satisfy some differentiability condition in $\epsilon$ and $\om$ (in a weak sense) at points of the form $(\om,0)$ and that
\begin{equation}\label{Contt}
\text{Lip}(\sigma^{-k})\leq Ce^{ck}, \quad  k\geq 0,\,\,\sigma:=\sigma_0    
\end{equation}
for $c$ small enough.    
This essentially means that, in the $\Omega$-direction, $\tau_0$ cannot contract too fast. 
This, of course, includes examples where $\sigma$ (or $\partial\sigma$) is an isometry (e.g. group ``rotation" with left invariant metric), and in that case $\Omega$ is the central direction of $\tau_0$. This case is probably one of the simplest types of partially hyperbolic skew products. Note  that \eqref{Contt} is an open condition in an appropriate sense (namely, it is stable under appropriate small perturbations). 
Notice that when $\sigma^{-1}$ is Lipchitz continuous, then we can always ensure that \eqref{Contt} holds by replacing the distance $d$ in $\Omega$ with a sufficiently small power $\alpha>0$ of $d$. In applications we  assume that $\sup_\om d(\sigma_\epsilon^{-1}\om, \sigma^{-1}\om)=O(\epsilon)
$ and the latter change of metric means that we can replace $c$ in \eqref{Contt} with $\alpha c$ if $\sup_\om d(\sigma_\epsilon^{-1}\om, \sigma^{-1}\om)=O(\epsilon^{1/\alpha})$.


We also obtain an annealed linear response under an additional assumption about linear response for the maps $\epsilon\to\mathbb P_\epsilon$, which holds true for group rotations (since then $\mathbb P_\epsilon=\text{Haar}$), but also for small perturbations of circle rotations (which also satisfy \eqref{Contt}); see \cite{GaSo}.

We also prove the differentiability of the asymptotic moments with respect to $\epsilon$. This means that we consider sufficiently regular functions $f_\epsilon:\Omega\times M\to\mathbb R$, and then we prove that the quantities 
$$
M_{k,\epsilon}=\lim_{n\to\infty}n^{-k/2}h_{\om,\epsilon}\left((\bar S_{n,\epsilon}^\omega f)^k\right)
$$
are differentiable at $\epsilon=0$. Here  
$$
S_{n,\epsilon}^\omega f=\sum_{j=0}^{n-1}f_{\epsilon}(\sigma_\epsilon^j\om,T_{\om,\epsilon}^j),\,\,\, \bar S_{n,\epsilon}^\omega f=S_{n,\epsilon}^\omega f-h_{\om,\epsilon}(S_{n,\epsilon}^\omega f)
$$
where $T_{\om,\epsilon}^j=T_{\sigma_\epsilon^{j-1}\om,\epsilon}\circ\ldots\circ T_{\sigma_\epsilon\om,\epsilon}\circ T_{\om,\epsilon}$. 
 These are the asymptotic moments in the central limit theorem for the sequence $(n^{-1/2}\bar S_{n,\epsilon}^\om f)$, when viewed as random variables on $(M,h_{\om,\epsilon})$.
Note that $M_{k,\epsilon}=M_{2,\epsilon}^{k/2}\mathbb E[Z^k]$, where $Z$ is standard normal. Thus, it is enough to prove the differentiability when $k=2$.
Our approach  is based on proving the differentiability of $M_{2,\epsilon}$  using the Green-Kubo formula. This requires proving regularity conditions of the map $\om\to h_{\omega,0}$ which are discussed in Section \ref{RegSec}.

\subsection{A comparison of our results with existing literature}
Our results have two main innovations. First, from a random dynamics point of view, we allow the base map $\sigma_\epsilon$ to depend on $\epsilon$. Second, from a deterministic point of view, our annealed results allow us to consider classes of partially hyperbolic maps that are not covered in \cite{Dol}, and parametric families of partially hyperbolic skew products that are not covered in  \cite{BashCastro26, CN, DS, DH CMP}. All of this will be elaborated on in the following.
\subsubsection{Annealed results}

In \cite{Dol}, a comprehensive study of the linear response was conducted for SRB measures $\mu_\epsilon$ of partially hyperbolic maps $\tau_\epsilon$. In that setup, after a reduction, the setup includes a $C^\infty$ partially hyperbolic map $\tau_0$ with dominated splitting and a central direction $E_c$ such that $\partial \tau_0|E_c$ is an isometry. For our maps, the support $\Omega_\epsilon$ of $\mathbb P_\epsilon$  plays the role of the central manifold, and condition \eqref{Contt} is satisfied when $\sigma$ or $\partial\sigma$ are isometries. However, we also assume that $\epsilon\to\mathbb P_\epsilon$ satisfies a linear response in an appropriate sense. Moreover, when the fiber maps $T_{\om,\epsilon}$ expand, we only assume some degree of smoothness, whereas in the hyperbolic case we work we assume smoothness of higher orders but less than $C^\infty$. In conclusion, our results concern a  different class of partially hyperbolic maps than in \cite{Dol}.

As noted above, in \cite{DS} (see also~\cite{CN}) the linear response was obtained in the setup of our article but in the special case where $\sigma_\epsilon=\sigma_0$ do not depend on $\epsilon$.
In \cite{BashCastro26} the linear response was studied for skew products with not necessarily invertible base maps $\sigma_\epsilon$ and uniformly contractive maps $T_{\om,\epsilon}$. However, due to possible expansion in the $M$ direction, we have different requirements on the base maps $\sigma_\epsilon$.
Another related result was obtained in~\cite[Theorem B]{DH CMP}, where the linear response for skew products was obtained for non-uniformly expanding random maps $T_{\om,\epsilon}$ and with a  fixed base map $\sigma_\epsilon=\sigma_0$.

\subsubsection{Quenched results}
So far, the study of the quenched linear response has focused on the case when the base map $\sigma_\epsilon$ does not depend on $\epsilon$. 
In that case, the study was initiated by  Rugh and Sedro~\cite{RS} for random expanding dynamics, followed by the works by Dragi\v cevi\' c and Sedro~\cite{DS} and Crimmins and  Nakano~\cite{CN} for random (partially) hyperbolic dynamics. More recently, in~\cite{DGTS, DL}, the authors established a quenched linear response for a class of random intermittent maps.

On the other hand, Dragi\v cevi\' c, Giulietti and Sedro~\cite{DGS} established the quenched linear response for a class of random dynamics that exhibits nonuniform decay of correlations. 
 More precisely, they considered the case of cocycles that are expanding on average. Namely, 
 in~\cite{DGS} it is assumed that there exists a $\log$-integrable random variable $\underline{\gamma} \colon \Omega \to (0, \infty)$ such that $\gamma_{\om, \epsilon} \ge \underline{\gamma}$ and 
\begin{equation}\label{minexp}
\int_\Omega \log \underline{\gamma}(\om) \, d\mathbb P(\om)>0,
\end{equation}
where $\gamma_{\om, \epsilon}$ denotes the minimal expansion of $T_{\om, \epsilon}$.  Note that~\eqref{minexp} allows for $\gamma_{\om, \epsilon}<1$ on a set of positive measure. Thus, in sharp contrast to~\cite{RS}, it is not required that all maps $T_{\om, \epsilon}$ expand or that there exists a uniform (in $\om$) lower bound for minimal expansion. 
It was also illustrated in~\cite[Appendix A]{DGS} that in this setup, the annealed linear response may fail even if the quenched linear response holds. Finally, in \cite{DH CMP} the authors showed that under some mixing assumptions on the base map $\sigma$ when the maps are expanding but not uniformly in $\omega$ then the effective quenched linear response holds (this has applications to the annealed linear response, discussed in the previous section).

As noted above, our main contribution compared to the above results is that we allow the base map to depend on $\epsilon$.

\subsection{Structure of the paper}
In Section \ref{SS} we prove quenched and annealed statistical stability for parametric random operator satisfying some abstract conditions. In Section \ref{LRsec} we prove the quenched and annealed linear response under similar abstract conditions. Section \ref{RegSec} concerns 
the regularity of the maps $(\om,\epsilon)\mapsto h_{\omega,\epsilon}$.
In Section \ref{SecDiff} we will prove the differentiability of asymptotic moments, while in Section \ref{Examples} we discuss applications to random expanding and hyperbolic maps and certain classes of base maps $\sigma_\epsilon$.


\section{Statistical stability}\label{SS}
Let $\Omega$ be a bounded metric space. Take a set $I\subset\mathbb R$ containing $0$, with $0$ being an accumulation point of $I$, and suppose that for each $\epsilon \in  I$, we have a homeomorphism $\sigma_\epsilon \colon \Omega \to \Omega$ and a Borel probability measure $\mathbb P_\epsilon$ on $\Omega$ which is $\sigma_\epsilon$-invariant. Henceforth, we denote $\sigma=\sigma_0$.

We consider two Banach spaces $(\mathcal B_w, \| \cdot \|_w)$ and $(\mathcal B_s, \| \cdot \|_s)$ such that $\mathcal B_s$ is embedded in $\mathcal B_w$ and $\|\cdot \|_w\le \|\cdot \|_s$ on $\mathcal B_s$. Moreover, let $\psi\in \mathcal B_s'$ be a non-zero bounded functional on $\mathcal B_s$   that admits a bounded extension to $\mathcal B_w$. Assume that for each $\omega \in \Omega$ and $\epsilon \in I$, $\mathcal L_{\omega, \epsilon}$ is a bounded linear operator on both $\mathcal B_s$ and $\mathcal B_w$. For $\omega \in \Omega$, $\epsilon \in I$ and $n\in \mathbb N$, set 
\begin{equation}\label{compos}
\mathcal L_{\omega, \epsilon}^n:=\mathcal L_{\sigma_\epsilon^{n-1}\omega, \epsilon}\circ \ldots \circ \mathcal L_{\sigma_\epsilon  \omega, \epsilon}\circ \mathcal L_{\omega, \epsilon}.
\end{equation}
We will write $\mathcal L_\omega$ instead of $\mathcal L_{\omega, 0}$ and similarly $\mathcal L_{\om}^j$ instead of $\mathcal L_{\om,0}^j$.
We also assume that 
\begin{equation}\label{preserv}
    \psi(\mathcal L_{\omega, \epsilon}h)=\psi(h), \quad \text{for $\epsilon \in I$, $\omega \in \Omega$ and $h\in \mathcal B_w$.}
\end{equation}
In our applications, $\mathcal L_{\omega, \epsilon}$ will be a transfer operator associated to a map $T_{\omega, \epsilon}\colon M\to M$, where $M$ is a compact Riemannian manifold.
\subsection{Quenched statistical stability}
We begin by establishing a quenched statistical stability result.
 \begin{maintheorem}\label{T1}
Assume that there exist constants $C, \lambda>0$, $\sigma_\epsilon$-invariant measurable sets $\Omega_\epsilon\subset\Omega,\,\epsilon\in I$ such that $\mathbb P_\epsilon(\Omega_\epsilon)=1$  and for each $\epsilon \in I$ a family $(h_{\omega, \epsilon})_{\omega \in \Omega_\epsilon}\subset \mathcal B_s$ such that:
\begin{enumerate}
\item for $h\in \mathcal B_w$ with $\psi(h)=0$, $n\in \mathbb N$, $\epsilon \in I$ and $\omega \in \Omega_\epsilon$,
\begin{equation}\label{dec}
\|\mathcal L_{\omega, \epsilon}^n h\|_{w}\le Ce^{-\lambda n}\|h\|_w;
\end{equation}
\item for $\epsilon \in I$, $h\in \mathcal B_s$, $\omega\in \Omega_\epsilon$ and  $\om'\in\Omega_0$,
\begin{equation}\label{triple}
\|\mathcal L_{\omega, \epsilon}h-\mathcal L_{\omega'}h\|_w \le C(d(\omega, \omega')^{\zeta}+|\epsilon|^\zeta)\|h\|_s
\end{equation}
for some constant $\zeta\in(0,1]$;
\item we have $\psi(h_{\omega, \epsilon})=1$ and 
\begin{equation}\label{bound}
\|h_{\omega, \epsilon}\|_{s}\le C \quad \text{for $\omega \in \Omega_\epsilon$ and $\epsilon \in I$;}
\end{equation}
\item for $\epsilon \in I$ and $\omega \in \Omega_\epsilon$,
\[
\mathcal L_{\omega, \epsilon}h_{\omega, \epsilon}=h_{\sigma_\epsilon \omega, \epsilon};
\]
\item $\sigma^{-1}$ is Lipschitz continuous. 
\end{enumerate}
Then there exists $D>0$ such that 
\[
\|h_{\omega, \epsilon}-h_\omega\|_w \le D\left(|\epsilon|^\zeta+\left(\dist_{C^0}(\sigma^{-1}, \sigma_\epsilon^{-1})\right)^{\alpha\zeta}\right),
\]
for each $\omega \in \Omega_\epsilon \cap \Omega_0$, where $h_\omega:=h_{\omega, 0}$,  $\dist_{C^0}(\sigma^{-1}, \sigma_\epsilon^{-1}):=\max_{y\in \Omega}d(\sigma^{-1}(y), \sigma_\epsilon^{-1}(y))$, and $\alpha\in(0,1]$ is a positive number satisfying $c\alpha\zeta<\lambda$, where $c=\ln(\Lip(\sigma^{-1}))$ and $\Lip(\cdot)$ denotes the Lipschitz constant. 
\end{maintheorem}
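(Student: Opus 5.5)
Writing $h_{\omega,\epsilon}$ as the image of the density at time $-n$, we get a telescoping identity. For $\omega\in\Omega_\epsilon\cap\Omega_0$ and $n\ge 1$, put $\omega_{j}^\epsilon:=\sigma_\epsilon^{-j}\omega$ and $\omega_j:=\sigma^{-j}\omega$. By invariance of $\Omega_\epsilon$ and $\Omega_0$, equivariance gives
\[
h_{\omega,\epsilon}=\mathcal L^n_{\omega_n^\epsilon,\epsilon}h_{\omega_n^\epsilon,\epsilon},\qquad h_\omega=\mathcal L^n_{\omega_n}h_{\omega_n}.
\]
We then write
\[
h_{\omega,\epsilon}-h_\omega=\mathcal L^n_{\omega_n^\epsilon,\epsilon}\big(h_{\omega_n^\epsilon,\epsilon}-h_{\omega_n}\big)+\sum_{j=1}^{n}\mathcal L^{j-1}_{\omega_{j-1}^\epsilon,\epsilon}\big(\mathcal L_{\omega_j^\epsilon,\epsilon}-\mathcal L_{\omega_j}\big)\mathcal L^{n-j}_{\omega_n}h_{\omega_n},
\]
where $\mathcal L^{n-j}_{\omega_n}h_{\omega_n}=h_{\omega_j}$. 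This is the standard telescoping, in which we swap one operator at a time going from the unperturbed to the perturbed chain.

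First, the boundary term. Since $\psi(h_{\omega_n^\epsilon,\epsilon}-h_{\omega_n})=0$, \eqref{dec} together with \eqref{bound} bounds it by $Ce^{-\lambda n}\cdot 2C$. Second, each summand. We have $\psi\big((\mathcal L_{\omega_j^\epsilon,\epsilon}-\mathcal L_{\omega_j})h_{\omega_j}\big)=0$ by \eqref{preserv}, so \eqref{dec} applies to $\mathcal L^{j-1}$, and then \eqref{triple} with $\|h_{\omega_j}\|_s\le C$ gives
\[
\text{summand}_j\le C^3e^{-\lambda(j-1)}\big(d(\omega_j^\epsilon,\omega_j)^\zeta+|\epsilon|^\zeta\big).
\]
Here \eqref{triple} is used with $\omega_j^\epsilon\in\Omega_\epsilon$ and $\omega_j\in\Omega_0$, exactly as required.

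Third, the distance between the two backward orbits. Write $\delta:=\dist_{C^0}(\sigma^{-1},\sigma_\epsilon^{-1})$ and $L:=\Lip(\sigma^{-1})=e^c$. We have
\[
d(\omega_j^\epsilon,\omega_j)\le d\big(\sigma_\epsilon^{-1}\omega_{j-1}^\epsilon,\sigma^{-1}\omega_{j-1}^\epsilon\big)+d\big(\sigma^{-1}\omega_{j-1}^\epsilon,\sigma^{-1}\omega_{j-1}\big)\le \delta+L\,d(\omega_{j-1}^\epsilon,\omega_{j-1}),
\]
hence $d(\omega_j^\epsilon,\omega_j)\le \delta\sum_{i<j}L^i\le C'j e^{cj}\delta$. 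This is the key step and the main obstacle: the orbit divergence grows like $e^{cj}$, and $c$ may exceed $\lambda/\zeta$, so we cannot simply sum over all $j$. The remedy is to use boundedness of $\Omega$. Setting $R:=\operatorname{diam}\Omega$, we have $d\le\min(R,C'je^{cj}\delta)\le R^{1-\alpha}(C'je^{cj}\delta)^\alpha$ for $\alpha\in(0,1]$. Thus
\[
d(\omega_j^\epsilon,\omega_j)^\zeta\le C''j^{\alpha\zeta}e^{c\alpha\zeta j}\delta^{\alpha\zeta}.
\]
Because $c\alpha\zeta<\lambda$, the series $\sum_j e^{-\lambda(j-1)}j^{\alpha\zeta}e^{c\alpha\zeta j}$ converges uniformly in $n$. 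The sum of the $|\epsilon|^\zeta$ contributions is also bounded by a geometric series. If $c\le 0$ no interpolation is needed.

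Finally, adding up, $\|h_{\omega,\epsilon}-h_\omega\|_w\le 2C^2e^{-\lambda n}+D(|\epsilon|^\zeta+\delta^{\alpha\zeta})$ for every $n$. Letting $n\to\infty$ gives the claim, with $D$ independent of $\omega$ and $\epsilon$.
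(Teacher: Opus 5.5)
Your proof is correct and follows the paper's argument essentially step for step. It uses the same telescoping decomposition along the perturbed and unperturbed backward orbits, the same use of \eqref{dec}, \eqref{triple}, \eqref{bound} and \eqref{preserv} on each summand, and the same orbit-divergence-plus-interpolation estimate based on the boundedness of $\Omega$. Your extra factor $j$ in $d(\sigma_\epsilon^{-j}\omega,\sigma^{-j}\omega)\le C'je^{cj}\dist_{C^0}(\sigma^{-1},\sigma_\epsilon^{-1})$ is in fact slightly more careful than the paper's bound $\sum_{k<j}e^{ck}\le C'e^{cj}$, which fails in the isometric case $c=0$. That failure is harmless, since $\sum_j j^{\alpha\zeta}e^{-(\lambda-c\alpha\zeta)j}<\infty$.
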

\begin{remark}\label{Rem}
 In the applications in Section \ref{Examples} for random hyperbolic or expanding maps, we can actually take $\Omega_\epsilon=\Omega$ for each $\epsilon \in I$.
 We note that when all $\mathbb P_\epsilon$ are absolutely continuous with respect to some positive measure $\nu$, we can require that all conditions hold for $\om\in\Omega'$ for a set $\Omega'$ such that $\nu(\Omega\setminus\Omega')=0$. 
 \end{remark}

 \begin{proof}[Proof of Theorem \ref{T1}]
For $\omega \in \Omega_\epsilon\cap \Omega_0$ we have 
\[
\begin{split}
h_{\omega, \epsilon}-h_\omega &=\mathcal L_{\sigma_\epsilon^{-1}\omega, \epsilon}h_{\sigma_\epsilon^{-1}\omega, \epsilon}-\mathcal L_{\sigma^{-1}\omega}h_{\sigma^{-1}\omega}\\
&=\mathcal L_{\sigma_\epsilon^{-1}\omega, \epsilon}(h_{\sigma_\epsilon^{-1}\omega, \epsilon}-h_{\sigma^{-1}\omega})+(\mathcal L_{\sigma_\epsilon^{-1}\omega, \epsilon}-\mathcal L_{\sigma^{-1}\omega})h_{\sigma^{-1}\omega}.
\end{split}
\]
Analogously,
\[
h_{\sigma_\epsilon^{-1}\omega, \epsilon}-h_{\sigma^{-1}\omega}=\mathcal L_{\sigma_\epsilon^{-2}\omega, \epsilon}(h_{\sigma_\epsilon^{-2}\omega, \epsilon}-h_{\sigma^{-2}\omega})+(\mathcal L_{\sigma_\epsilon^{-2}\omega, \epsilon}-\mathcal L_{\sigma^{-2}\omega})h_{\sigma^{-2}\omega},
\]
and thus
\[
h_{\omega, \epsilon}-h_\omega=\mathcal L_{\sigma_\epsilon^{-2}\omega, \epsilon}^2(h_{\sigma_\epsilon^{-2}\omega, \epsilon}-h_{\sigma^{-2}\omega})+\mathcal L_{\sigma_\epsilon^{-1}\omega, \epsilon}(\mathcal L_{\sigma_\epsilon^{-2}\omega, \epsilon}-\mathcal L_{\sigma^{-2}\omega})h_{\sigma^{-2}\omega}+(\mathcal L_{\sigma_\epsilon^{-1}\omega, \epsilon}-\mathcal L_{\sigma^{-1}\omega})h_{\sigma^{-1}\omega}.
\]
Proceeding in the same manner, we have 
\[
h_{\omega, \epsilon}-h_\omega=\mathcal L_{\sigma^{-n}_\epsilon \om, \epsilon}^n(h_{\sigma_\epsilon^{-n}\omega, \epsilon}-h_{\sigma^{-n}\omega})+\sum_{j=0}^{n-1}\mathcal L_{\sigma_\epsilon^{-j}\omega, \epsilon}^j(\mathcal L_{\sigma^{-(j+1)}_\epsilon \omega, \epsilon}-\mathcal L_{\sigma^{-(j+1)}\omega})h_{\sigma^{-(j+1)}\omega},
\]
for $n\in \mathbb N$.   Using~\eqref{dec} and~\eqref{bound} (note that $\psi(h_{\omega, \epsilon}-h_\omega)=0$), we have 
\[
\|\mathcal L_{\sigma^{-n}_\epsilon \om, \epsilon}^n(h_{\sigma_\epsilon^{-n}\omega, \epsilon}-h_{\sigma^{-n}\omega})\|_w \le Ce^{-\lambda n}\|h_{\sigma_\epsilon^{-n}\omega, \epsilon}-h_{\sigma^{-n}\omega}\|_w\le 2C^2e^{-\lambda n},
\]
and consequently
\[
\|\mathcal L_{\sigma^{-n}_\epsilon \om, \epsilon}^n(h_{\sigma_\epsilon^{-n}\omega, \epsilon}-h_{\sigma^{-n}\omega})\|_w\to 0 \quad \text{when $n\to \infty$,}
\]
for $\omega \in \Omega_\epsilon \cap \Omega_0$. Thus, 
\begin{equation}\label{1314}
h_{\omega, \epsilon}-h_\omega=\sum_{j=0}^\infty \mathcal L_{\sigma_\epsilon^{-j}\omega, \epsilon}^j(\mathcal L_{\sigma^{-(j+1)}_\epsilon \omega, \epsilon}-\mathcal L_{\sigma^{-(j+1)}\omega})h_{\sigma^{-(j+1)}\omega}, \quad \omega \in \Omega_\epsilon \cap \Omega_0.
\end{equation}
We now have (using~\eqref{dec}, \eqref{triple} and~\eqref{bound}) that 
\[
\begin{split}
\|h_{\omega, \epsilon}-h_\omega\|_w &\le \sum_{j=0}^\infty \|\mathcal L_{\sigma_\epsilon^{-j}\omega, \epsilon}^j(\mathcal L_{\sigma^{-(j+1)}_\epsilon \omega, \epsilon}-\mathcal L_{\sigma^{-(j+1)}\omega})h_{\sigma^{-(j+1)}\omega}\|_w \\
&\le C^3\sum_{j=0}^\infty e^{-\lambda j}(d(\sigma_\epsilon^{-(j+1)}\omega, \sigma^{-(j+1)}\omega)^\zeta+|\epsilon|^\zeta)\\
&=\frac{C^3}{1-e^{-\lambda}}|\epsilon|^\zeta+C^3\sum_{j=0}^\infty e^{-\lambda j}d(\sigma_\epsilon^{-(j+1)}\omega, \sigma^{-(j+1)}\omega)^\zeta,
\end{split}
\]
for $\omega \in \Omega_\epsilon \cap \Omega_0$, as by~\eqref{preserv} \[\psi((\mathcal L_{\sigma^{-(j+1)}_\epsilon \omega, \epsilon}-\mathcal L_{\sigma^{-(j+1)}\omega})h_{\sigma^{-(j+1)}\omega})=0.\]
Next, note that 
\begin{equation}\label{1223}
\begin{split}
d(\sigma^{-j}\omega,\sigma_{\epsilon}^{-j}\omega) &\leq \sum_{k=0}^{j-1}d\left(\sigma^{-k}(\sigma^{-1}(\sigma_{\epsilon}^{-(j-(k+1))}\om)),\sigma^{-k}(\sigma_{\epsilon}^{-1}(\sigma_{\epsilon}^{-(j-(k+1))}\om))\right)\\
&\le \sum_{k=0}^{j-1}\Lip (\sigma^{-k})d\left(\sigma^{-1}(\sigma_{\epsilon}^{-(j-(k+1))}\om), \sigma_{\epsilon}^{-1}(\sigma_{\epsilon}^{-(j-(k+1))}\om)\right)\\
&\le \sum_{k=0}^{j-1}e^{ck}\dist_{C^0}(\sigma^{-1}, \sigma_\epsilon^{-1})\leq C'e^{cj}\dist_{C^0}(\sigma^{-1}, \sigma_\epsilon^{-1}),
\end{split}
\end{equation}
where $C'>0$ is independent of $\omega$, $\epsilon$ and $j$.
Using that the metric on $\Omega$ is bounded by some constant $D_0>0$, we conclude that
$$
d(\sigma^{-j}\omega,\sigma_{\epsilon}^{-j}\omega)^{\zeta}\leq D_0^{(1-\alpha)\zeta}(C')^{\alpha\zeta} e^{cj\alpha\zeta}\left(\dist_{C^0}(\sigma^{-1}, \sigma_\epsilon^{-1})\right)^{\alpha\zeta}.
$$
Consequently,
\[
\|h_{\omega, \epsilon}-h_\omega\|_w\le \frac{C^3}{1-e^{-\lambda}}|\epsilon|^\zeta+C''\left(\dist_{C^0}(\sigma^{-1}, \sigma^{-1}_\epsilon)\right)^{\alpha\zeta}\sum_{j=0}^\infty e^{-\lambda j+c\alpha\zeta(j+1)},
\]
where $C''>0$ is independent of $\omega$ and $\epsilon$.
We conclude that 
\[
\|h_{\omega, \epsilon}-h_\omega\|_w \le D\left(|\epsilon|^\zeta+\left(\dist_{C^0}(\sigma^{-1}, \sigma_\epsilon^{-1})\right)^{\alpha\zeta}\right),
\]
for each $\omega \in \Omega_\epsilon\cap \Omega_0$, where $D>0$ is a constant independent of $\omega$ and $\epsilon$.
 \end{proof}

  \subsection{Annealed statistical stability}
Let $(\mathfrak F,\|\cdot\|_{\mathfrak F})$ be a semi-normed space of measurable bounded real-valued functions on $\Omega$. We suppose that 
$$
R:=\sup_{\epsilon\in I}\sup_{f\in\mathfrak F: \|f\|_{\mathfrak F}\leq 1}\left|\int_\Omega f\,d\mathbb P_\epsilon\right|<\infty.
$$
For a finite Borel measure $\nu$ on $\Omega$ denote 
$$
\|\nu\|_{\mathfrak F}=\sup_{f\in\mathfrak F, \|f\|_{\mathfrak F}\leq 1}\left|\int_\Omega f d\nu\right|.
$$

Let $M$ be a compact Riemannian manifold. In order to include applications to small perturbations of Anosov maps $T_{\om,\epsilon}$ as well as to expanding maps $T_{\om,\epsilon}$, we will consider two possibilities. The first possibility is that $\mathcal B_w$ contains bounded measurable functions on $M$ and $\|\cdot\|_\infty\leq \|\cdot\|_w$. In this case, for bounded measurable functions $g,h \colon M\to\mathbb R$, we write $h(g)=\int_M g h\,d m$, where $m$ is the normalized volume measure on $M$. The second possibility is that there exists $r\in \mathbb N$ such that the elements of $\mathcal B_w$ are distributions or order $r$, i.e., there is $C>0$ such that for $h\in \mathcal B_w$,
\begin{equation}\label{distrib}
|h(\varphi)|\le C\|h\|_w \cdot \|\varphi \|_{C^r},
\end{equation}
where $\varphi \in C^r(M, \mathbb C)$.

We have the following annealed statistical stability result.
 \begin{maintheorem}\label{T2}
Let the assumptions of Theorem~\ref{T1} be in force  
and assume that there\footnote{c.f. Remark \ref{Rem}} is a measurable set $\Omega'$ such that $\Omega'\subset\Omega_\epsilon$  for every $\epsilon$ and $\mathbb P_\epsilon(\Omega')=1$. In addition,  suppose that 
$h_{\omega, \epsilon}$ are Borel probability measures on $M$ (or densities when $\|\cdot\|_\infty\leq \|\cdot\|_w$) that depend measurably on $\omega$ for each $\epsilon \in I$, 
\begin{equation}\label{1211}
\lim_{\epsilon \to 0}\|\mathbb P_\epsilon-\mathbb P_{0}\|_{\mathfrak F}=0
 \text{ and } \quad \lim_{\epsilon \to 0}\dist_{C^0}(\sigma^{-1}, \sigma_\epsilon^{-1})=0.
\end{equation}
Finally, let $\Phi \colon \Omega \times M\to \mathbb R$ be measurable such that $\Phi(\omega, \cdot)\in C^r(M, \mathbb R)$ (or $\Phi(\omega, \cdot)\in L^1(m)$ when $\|\cdot\|_\infty\leq \|\cdot\|_w$) and that 
the functions $\varphi, \theta \colon \Omega\to\mathbb R$ given by
\[
\varphi(\omega):=h_\omega(\Phi(\omega, \cdot)), \,\theta(\omega):=\|\Phi(\omega, \cdot)\|_{C^r}, \quad \,\,\left (\text{or }\,\theta(\om)=\int_M |\Phi(\om,\cdot)|dm\text{ when }\|\cdot\|_\infty\leq \|\cdot\|_w \right )
\]
belong to $\mathfrak F$.
Then
\[
\lim_{\epsilon \to 0}\int_{\Omega \times M}\Phi \, d\mu_\epsilon=\int_{\Omega \times M}\Phi \, d\mu,
\]
where $\mu_\epsilon$ is the measure on $\Omega \times M$ such that 
\[
\mu_\epsilon(A\times B)=\int_\Omega h_{\omega, \epsilon}(B)\, d\mathbb P_\epsilon(\omega)
\]
and $\mu:=\mu_0$.
 \end{maintheorem}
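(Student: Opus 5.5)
The plan is to split the difference $\int\Phi\,d\mu_\epsilon-\int\Phi\,d\mu$ into a fiber term and a base term. Since $\mathbb P_\epsilon(\Omega')=1$ and $\Omega'\subset\Omega_\epsilon\cap\Omega_0$, we can write
\[
\int_{\Omega\times M}\Phi\,d\mu_\epsilon-\int_{\Omega\times M}\Phi\,d\mu
=\int_{\Omega'}\big(h_{\omega,\epsilon}-h_\omega\big)(\Phi(\omega,\cdot))\,d\mathbb P_\epsilon(\omega)
+\Big(\int_\Omega \varphi\,d\mathbb P_\epsilon-\int_\Omega\varphi\,d\mathbb P_0\Big).
\]
First we justify that $\int\Phi\,d\mu_\epsilon=\int h_{\omega,\epsilon}(\Phi(\omega,\cdot))\,d\mathbb P_\epsilon(\omega)$. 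This is the standard disintegration identity. It extends from product sets to $\Phi$ by a monotone class argument, using the measurability of $\omega\mapsto h_{\omega,\epsilon}$. Integrability is guaranteed by the bound $|h_{\omega,\epsilon}(\Phi(\omega,\cdot))|\le C\|h_{\omega,\epsilon}\|_w\theta(\omega)\le C^2\theta(\omega)$, which follows from \eqref{distrib} or from $\|\cdot\|_\infty\le\|\cdot\|_w$, together with \eqref{bound}.

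The second term is bounded by $\|\varphi\|_{\mathfrak F}\,\|\mathbb P_\epsilon-\mathbb P_0\|_{\mathfrak F}$, since $\varphi\in\mathfrak F$. By \eqref{1211} it tends to $0$. (If $\|\varphi\|_{\mathfrak F}=0$, then $|\int\varphi\,d\nu|\le t\|\nu\|_{\mathfrak F}$ for every $t>0$, so the term vanishes.)

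For the first term we apply Theorem~\ref{T1} pointwise on $\Omega'$. In the distributional case, \eqref{distrib} gives
\[
|(h_{\omega,\epsilon}-h_\omega)(\Phi(\omega,\cdot))|\le C\|h_{\omega,\epsilon}-h_\omega\|_w\,\theta(\omega)\le CD\,\delta_\epsilon\,\theta(\omega),
\]
where $\delta_\epsilon:=|\epsilon|^\zeta+\dist_{C^0}(\sigma^{-1},\sigma_\epsilon^{-1})^{\alpha\zeta}$. In the density case, $\|\cdot\|_\infty\le\|\cdot\|_w$ gives the same bound with $\theta(\omega)=\int|\Phi(\omega,\cdot)|\,dm$. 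Integrating against $\mathbb P_\epsilon$, the first term is at most $CD\,\delta_\epsilon\int\theta\,d\mathbb P_\epsilon\le CD\,\delta_\epsilon\,R\,\|\theta\|_{\mathfrak F}$. A small point arises if $\|\theta\|_{\mathfrak F}=0$: the bound $\int\theta\,d\mathbb P_\epsilon\le R\|\theta\|_{\mathfrak F}$ requires rescaling, but the same argument shows $\sup_\epsilon\int\theta\,d\mathbb P_\epsilon<\infty$ in all cases, since $\theta\ge0$. By \eqref{1211}, $\delta_\epsilon\to0$, so the first term tends to $0$.

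The main obstacle is only bookkeeping. We must make sure the uniform constant $D$ from Theorem~\ref{T1} holds on all of $\Omega'$, and that measurability holds, so the integrals make sense. Note that $\varphi$ is the fiber integral at $\epsilon=0$, while the $\epsilon$-dependence of both the base measure and the fiber measures is absorbed by the two estimates above. Combining the two estimates yields the claimed limit.
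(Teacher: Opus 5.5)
Your proposal is correct and follows the paper's proof essentially verbatim. You use the same split into a fiber term, controlled by Theorem~\ref{T1} via \eqref{distrib} (or the $L^\infty$ bound) together with $R\|\theta\|_{\mathfrak F}$, and a base term, controlled by $\|\varphi\|_{\mathfrak F}\,\|\mathbb P_\epsilon-\mathbb P_0\|_{\mathfrak F}$. Your remarks on the disintegration identity, integrability and the degenerate-seminorm cases are harmless additions that the paper leaves implicit.
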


  \begin{proof}
We have
\[
\begin{split}
& \int_{\Omega \times M}\Phi \, d\mu_\epsilon-\int_{\Omega \times M}\Phi \, d\mu\\
&=\int_{\Omega'} \left (h_{\omega, \epsilon}-h_\omega)(\Phi(\omega, \cdot) \right)d\mathbb P_{\epsilon} +\int_\Omega h_{\omega}(\Phi(\omega, \cdot))(d\mathbb P_{\epsilon}-d\mathbb P_0)\\
&=:(I)_\epsilon+(II)_\epsilon.
\end{split}
\]
Notice that
\[
|(II)_\epsilon|\le \|\varphi\|_{\mathfrak F} \cdot \|\mathbb P_\epsilon -\mathbb P_0\|_{\mathfrak F}\to 0 \quad \text{when $\epsilon \to 0$,}
\]
by~\eqref{1211}. 

We now turn to $(I)_\epsilon$. In the case of distributions, \eqref{distrib} implies that 
\[
|(h_{\omega, \epsilon}-h_\omega)(\Phi(\omega, \cdot))|\le C\theta(\omega)\|h_{\omega, \epsilon}-h_\omega\|_w,
\]
while in the second case, we have 
\[
|(h_{\omega, \epsilon}-h_\omega)(\Phi(\omega, \cdot))|\le \theta(\omega)\|h_{\omega, \epsilon}-h_\omega\|_{L^\infty(m)}\le  \theta(\omega) \|h_{\omega, \epsilon}-h_\omega\|_w.
\]
Consequently, using Theorem~\ref{T1}, we have
\[
\begin{split}
|(I)_\epsilon|
&\le C\int_{\Omega'}\theta(\omega)\|h_{\omega, \epsilon}-h_\omega\|_{w}\, d\mathbb P_\epsilon(\omega)\\
&\le DR\|\theta\|_{\mathfrak F}(|\epsilon|^\zeta+(\dist_{C^0}(\sigma^{-1}, \sigma_\epsilon^{-1}))^{\alpha\zeta}),
\end{split}
\]
which also goes to $0$ when $\epsilon \to 0$ by the second equality in~\eqref{1211}. The proof of the theorem is complete.
 \end{proof}

 \section{Linear response}\label{LRsec}
In this section, we assume that $\Omega$ is also a compact Riemannian manifold and prove the differentiability of $h_{\om,\epsilon}$ (quenched) and $\mu_\epsilon$ (annealed) at $\epsilon=0$. 
We also consider a third Banach space  $\mathcal B_{ss}$ such that  $\mathcal B_{ss}$ is embedded in $\mathcal B_{s}$. In addition, we suppose that 
\[
 \| \cdot \|_s \le \| \cdot \|_{ss}.
\]
Moreover, we suppose that $\psi$ acts as a bounded functional on $\mathcal B_{ss}$. Let $\mathcal B_s^0$ denote the closed subspace of $\mathcal B_s$ consisting of $h\in \mathcal B_s$ with $\psi(h)=0$.
  \subsection{Quenched linear response}

  We now establish a quenched linear response result.

\begin{maintheorem}\label{T3}
Let the assumption of Theorem~\ref{T1} be in force.  In addition, assume that:
\begin{enumerate}

\item there is a constant $K>0$ such that for every $\omega\in\Omega_0$,
\begin{equation}\label{9:29}
\sup_j\|\mathcal L_\omega^j\|_{s}\leq K;
\end{equation}

\item $h_{\omega}\in \mathcal B_{ss}$ for $\omega \in \Omega_0$ and there is a constant $C>0$ such that $\|h_{\om}\|_{ss}\leq C$ for every $\om\in\Omega_0$;
\item 
for every function $\gamma\colon I\to\Omega$ differentiable at $\epsilon=0$ there exists a bounded operator $\frac{\partial \mathcal L_{\gamma(\epsilon), \epsilon}}{\partial \epsilon}\Big \rvert_{\epsilon=0}\colon \mathcal B_{ss}\to \mathcal B_s^0$ such that for  all 
 $g\in \mathcal B_{ss}$ and $\epsilon\in I$,  we have
\begin{equation}\label{816}
\left\|(\mathcal L_{\gamma(\epsilon), \epsilon}-\mathcal L_{\gamma(0),0})g-\epsilon\left(\frac{\partial\mathcal L_{\gamma(\epsilon), \epsilon}}{\partial \epsilon}\Bigg|_{\epsilon=0}\right)g\right\|_{s}\leq C\left(|\epsilon|^{1+\zeta}+d(\gamma(\epsilon),\gamma(0))^{1+\zeta}\right)\|g\|_{ss},
\end{equation}
where $0<\zeta\leq 1$ is the constant from Theorem \ref{T1};

\item 
the maps $\epsilon\to \sigma_\epsilon^{-k}\om$ are differentiable at $\epsilon=0$ and
\begin{equation}\label{Lipp}
\text{Lip}(\sigma^{-k})\leq Ce^{ck}    
\end{equation}
for some $c<\lambda/(1+\zeta)$ and all $k\in\N$;

\item  $\dist_{C^0}(\sigma^{-1}, \sigma_\epsilon^{-1})=O(\epsilon)$.
\end{enumerate}

Set 
$$
\Lambda_{\om,j}=\frac{\partial \mathcal L_{\sigma^{-(j+1)}_\epsilon \omega, \epsilon}}{\partial \epsilon}\Bigg|_{\epsilon=0},
$$
and suppose that 
\begin{equation}\label{unifb}
\sup_{\omega, j}\|\Lambda_{\omega, j}\|<+\infty.
\end{equation}
Then the series 
$$
\Gamma_{\om}=\sum_{j=0}^\infty\mathcal L_{\sigma^{-j}\omega}^j\Lambda_{\om,j}h_{\sigma^{-(j+1)}\omega}
$$
converges in the norm $\|\cdot\|_{w}$, its $\|\cdot\|_w$ norm is uniformly bounded in $\omega$ (with $\om\in\Omega_0$) and for every $\om\in\Omega_\epsilon\cap\Omega_0$, 
$$
\left\|h_{\omega, \epsilon}-h_\omega-\epsilon\Gamma_\omega\right\|_w=O(|\ln |\epsilon|| \cdot |\epsilon|^{1+\zeta}).
$$
\end{maintheorem}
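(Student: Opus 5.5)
Our starting point is the series representation \eqref{1314} from the proof of Theorem~\ref{T1}:
\[
h_{\omega,\epsilon}-h_\omega=\sum_{j=0}^\infty \mathcal L^j_{\sigma_\epsilon^{-j}\omega,\epsilon}\bigl(\mathcal L_{\sigma_\epsilon^{-(j+1)}\omega,\epsilon}-\mathcal L_{\sigma^{-(j+1)}\omega}\bigr)h_{\sigma^{-(j+1)}\omega},
\]
valid for $\omega\in\Omega_\epsilon\cap\Omega_0$. Convergence and uniform boundedness of $\Gamma_\omega$ are immediate. By \eqref{preserv} and the fact that $\Lambda_{\omega,j}$ maps into $\mathcal B_s^0$, each term has $\psi$ equal to zero. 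Then \eqref{dec}, \eqref{unifb} and $\|h_\omega\|_{ss}\le C$ give a term bound $Ce^{-\lambda j}\sup\|\Lambda\|\,C$, which is summable.

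For the main estimate, I fix a cutoff $N=N(\epsilon)\approx A|\ln|\epsilon||$ and split $h_{\omega,\epsilon}-h_\omega-\epsilon\Gamma_\omega$ into four pieces.
\begin{enumerate}
\item \emph{The tail $j\ge N$ of \eqref{1314}.} Each term is bounded in $\|\cdot\|_w$ by $C^3e^{-\lambda j}\bigl(d(\sigma_\epsilon^{-(j+1)}\omega,\sigma^{-(j+1)}\omega)^\zeta+|\epsilon|^\zeta\bigr)$, exactly as in Theorem~\ref{T1}. The crude bound $2C^3e^{-\lambda j}$ already suffices.
\item \emph{The tail $j\ge N$ of $\epsilon\Gamma_\omega$.} This is $O(|\epsilon|e^{-\lambda N})$.
\end{enumerate}
Choosing $A$ large enough that $e^{-\lambda N}\le|\epsilon|^{1+\zeta}$ makes both tails $O(|\epsilon|^{1+\zeta})$.

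For each $j<N$, the head term minus $\epsilon$ times the $j$th term of $\Gamma_\omega$ is split as
\[
\mathcal L^j_{\sigma_\epsilon^{-j}\omega,\epsilon}\Bigl[(\mathcal L_{\sigma_\epsilon^{-(j+1)}\omega,\epsilon}-\mathcal L_{\sigma^{-(j+1)}\omega})h-\epsilon\Lambda_{\omega,j}h\Bigr]+\epsilon\bigl(\mathcal L^j_{\sigma_\epsilon^{-j}\omega,\epsilon}-\mathcal L^j_{\sigma^{-j}\omega}\bigr)\Lambda_{\omega,j}h,
\]
with $h=h_{\sigma^{-(j+1)}\omega}$.
\begin{enumerate}
\setcounter{enumi}{2}
\item \emph{The first bracket.} I apply \eqref{816} with $\gamma(\epsilon)=\sigma_\epsilon^{-(j+1)}\omega$, which is differentiable by item 4 of the statement. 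This bounds the bracket in $\|\cdot\|_s\subset\|\cdot\|_w$ by $C(|\epsilon|^{1+\zeta}+d(\sigma_\epsilon^{-(j+1)}\omega,\sigma^{-(j+1)}\omega)^{1+\zeta})$. The bracket has $\psi=0$, so \eqref{dec} contributes a factor $e^{-\lambda j}$. By \eqref{1223}, \eqref{Lipp} and item 5, $d(\sigma_\epsilon^{-(j+1)}\omega,\sigma^{-(j+1)}\omega)\le C'e^{cj}|\epsilon|$. Hence this piece is at most $e^{-\lambda j}\bigl(|\epsilon|^{1+\zeta}+e^{c(1+\zeta)j}|\epsilon|^{1+\zeta}\bigr)$. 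This is summable uniformly in $\epsilon$ precisely because $c(1+\zeta)<\lambda$, and yields $O(|\epsilon|^{1+\zeta})$.
\item \emph{The second piece.} Here I must bound $\|(\mathcal L^j_{\sigma_\epsilon^{-j}\omega,\epsilon}-\mathcal L^j_{\sigma^{-j}\omega})g\|_w$ for $g=\Lambda_{\omega,j}h\in\mathcal B_s^0$, carrying the prefactor $\epsilon$. I telescope:
\[
\mathcal L^j_{\sigma_\epsilon^{-j}\omega,\epsilon}-\mathcal L^j_{\sigma^{-j}\omega}=\sum_{i=0}^{j-1}\mathcal L^{j-1-i}_{\cdot,\epsilon}\bigl(\mathcal L_{\sigma_\epsilon^{-(j-i)}\omega,\epsilon}-\mathcal L_{\sigma^{-(j-i)}\omega}\bigr)\mathcal L^{i}_{\sigma^{-j}\omega}.
\]
Each inner difference is estimated by \eqref{triple}. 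The middle factor is controlled in $\|\cdot\|_s$ using \eqref{9:29}, i.e. $\|\mathcal L^i g\|_s\le K\|g\|_s$. The outer factor gets $e^{-\lambda(j-1-i)}$ from \eqref{dec}, since $\psi$ is preserved. Using $d\le C'e^{cj}|\epsilon|$, each summand is $O\bigl(e^{-\lambda(j-1-i)}(|\epsilon|^\zeta+e^{c\zeta j}|\epsilon|^\zeta)\bigr)$. Summing over $i$ gives $O(|\epsilon|^\zeta e^{c\zeta j})$. After multiplying by $\epsilon$ and summing over $j<N$, the total is $O\bigl(|\epsilon|^{1+\zeta}\sum_{j<N}e^{c\zeta j}\bigr)$.
\end{enumerate}

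This last sum is the main obstacle, since no $e^{-\lambda j}$ decay is available there. If $c=0$ it gives $N|\epsilon|^{1+\zeta}=O(|\ln|\epsilon||\,|\epsilon|^{1+\zeta})$, which is exactly the claimed rate and explains the logarithm. For $c>0$ the plan is to exploit decay more carefully. Since $g$ has $\psi(g)=0$, \eqref{dec} gives $\|\mathcal L^i g\|_w\le Ce^{-\lambda i}$. I would try to interpolate between this $\|\cdot\|_w$ decay and the $\|\cdot\|_s$ bound \eqref{9:29}, or alternatively to write both $j$-fold products as differences of $\mathcal L^j$ applied to $\psi$-zero objects and use $e^{-\lambda j}$ directly, so as to absorb the factor $e^{c\zeta j}$ using $c<\lambda/(1+\zeta)$. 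Failing that, I would adjust the cutoff $N$, or replace $d$ by a smaller power $d^{\alpha}$ as in Theorem~\ref{T1}, to keep the sum at the order $|\ln|\epsilon||\,|\epsilon|^{1+\zeta}$. Combining the four pieces then gives the stated estimate uniformly in $\omega\in\Omega_\epsilon\cap\Omega_0$.
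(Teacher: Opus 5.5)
Your architecture matches the paper's: the cutoff $N\approx A|\ln|\epsilon||$, the first bracket handled via \eqref{816} using $c(1+\zeta)<\lambda$, and the telescoping of $\mathcal L^j_{\sigma_\epsilon^{-j}\omega,\epsilon}-\mathcal L^j_{\sigma^{-j}\omega}$ combined with \eqref{triple}, \eqref{9:29} and \eqref{dec}. However, step 4 as written does not give the stated rate when $c>0$. The ``main obstacle'' is not real; it comes from a lossy estimate.

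In the $i$-th telescoped term the inner difference is $\mathcal L_{\sigma_\epsilon^{-(j-i)}\omega,\epsilon}-\mathcal L_{\sigma^{-(j-i)}\omega}$, so the relevant distance is the one at time $j-i$. Applying \eqref{1223} with $j-i$ in place of $j$ gives $d(\sigma_\epsilon^{-(j-i)}\omega,\sigma^{-(j-i)}\omega)\le C'e^{c(j-i)}|\epsilon|$. You replaced this by $C'e^{cj}|\epsilon|$. That decouples the growth from the decay $e^{-\lambda(j-1-i)}$ of the outer factor, and leaves you with $\sum_{j<N}e^{c\zeta j}$, which is of order $|\epsilon|^{-Ac\zeta}$. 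With the sharp bound, the $i$-th summand is at most a constant times
\[
e^{-\lambda(j-1-i)}\bigl(1+e^{c\zeta(j-i)}\bigr)|\epsilon|^{\zeta},
\]
and this is summable over $0\le i\le j-1$ uniformly in $j$ (put $m=j-1-i$) because $c\zeta<\lambda$. Hence
\[
\bigl\|(\mathcal L^j_{\sigma_\epsilon^{-j}\omega,\epsilon}-\mathcal L^j_{\sigma^{-j}\omega})\Lambda_{\omega,j}h_{\sigma^{-(j+1)}\omega}\bigr\|_w\le C''|\epsilon|^\zeta
\]
uniformly in $j$ and $\omega$, which is precisely the paper's estimate. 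Multiplying by $\epsilon$ and summing over $j<N$ gives $O(N|\epsilon|^{1+\zeta})=O(|\ln|\epsilon||\,|\epsilon|^{1+\zeta})$ for every admissible $c$. The logarithm comes from having $N$ terms of size $|\epsilon|^{1+\zeta}$, not from the special case $c=0$.

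The fallback strategies you list would not recover the claimed bound.
\begin{itemize}
\item Interpolating with \eqref{dec} or re-optimizing $N$ amounts to balancing your crude estimate $|\epsilon|^{1+\zeta}e^{c\zeta j}$ against the trivial bound $O(|\epsilon|e^{-\lambda j})$. This yields at best an error of order $|\epsilon|^{1+\zeta\lambda/(\lambda+c\zeta)}$.
\item Passing to $d^{\alpha}$ with $\alpha<\zeta$, in the spirit of Remark~\ref{Rem1}, only lowers the power of $|\epsilon|$.
\end{itemize}
Both give differentiability, but not $O(|\ln|\epsilon||\,|\epsilon|^{1+\zeta})$.

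Once step 4 uses the time-$(j-i)$ distance bound, your steps 1--3 (with $A\lambda\ge 1+\zeta$) complete the argument. The only difference from the paper is ordering: the paper removes the \eqref{816}-error over all $j$ before truncating at $N$.
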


 \begin{proof}
Recall (see~\eqref{1223}) that for every $\om$,
 $$
d(\sigma_{\epsilon}^{-j}\omega,\sigma^{-j}\omega)\leq C'e^{cj}\dist_{C^0}(\sigma^{-1}, \sigma_\epsilon^{-1})=O(e^{cj}|\epsilon|).
$$
 Applying~\eqref{816} for $\gamma(\epsilon)=\sigma_\epsilon^{-(j+1)}\omega$, we have that uniformly in $\omega$,
\begin{equation}\label{1340}
\left\|(\mathcal L_{\sigma^{-(j+1)}_\epsilon \omega, \epsilon}-\mathcal L_{\sigma^{-(j+1)}\omega})h_{\sigma^{-(j+1)}\omega}-\epsilon \Lambda_{\om,j}h_{\sigma^{-(j+1)}\omega}\right\|_{s}=O(e^{(1+\zeta)cj}|\epsilon|^{1+\zeta}).
\end{equation}
Next, using~\eqref{1314} we have
\[
h_{\omega, \epsilon}-h_\omega=\sum_{j=0}^\infty \mathcal L_{\sigma_\epsilon^{-j}\omega, \epsilon}^j(\mathcal L_{\sigma^{-(j+1)}_\epsilon \omega, \epsilon}-\mathcal L_{\sigma^{-(j+1)}\omega})h_{\sigma^{-(j+1)}\omega}
\]
\[
=\epsilon\sum_{j=0}^\infty\mathcal L_{\sigma_\epsilon^{-j}\omega, \epsilon}^j\Lambda_{\om,j}h_{\sigma^{-(j+1)}\omega}+O_w(|\epsilon|^{1+\zeta}),
\]
where $\|O_w(|\epsilon|^{1+\zeta})\|_w=O(|\epsilon|^{1+\zeta})$. Here we used (recall $(1+\zeta)c<\lambda$) that 
\[
\begin{split}
    &\left \|\sum_{j=0}^\infty \mathcal L_{\sigma_\epsilon^{-j}\omega, \epsilon}^j(\mathcal L_{\sigma^{-(j+1)}_\epsilon \omega, \epsilon}-\mathcal L_{\sigma^{-(j+1)}\omega}-\epsilon \Lambda_{\omega, j})h_{\sigma^{-(j+1)}\omega}\right \|_w\\
    &\le \sum_{j=0}^\infty \left \| \mathcal L_{\sigma_\epsilon^{-j}\omega, \epsilon}^j(\mathcal L_{\sigma^{-(j+1)}_\epsilon \omega, \epsilon}-\mathcal L_{\sigma^{-(j+1)}\omega}-\epsilon \Lambda_{\omega, j})h_{\sigma^{-(j+1)}\omega}\right \|_w \\
    &\le \sum_{j=0}^\infty Ce^{-\lambda j}\left \| (\mathcal L_{\sigma^{-(j+1)}_\epsilon \omega, \epsilon}-\mathcal L_{\sigma^{-(j+1)}\omega}-\epsilon \Lambda_{\omega, j})h_{\sigma^{-(j+1)}\omega}\right \|_w =O(|\epsilon|^{1+\zeta}),\\
\end{split}
\]
by~\eqref{1340}, \eqref{dec} 
and~\eqref{preserv}. 

Next, set 
$$
\Delta_{\omega,j}(\epsilon)=\mathcal L_{\sigma_\epsilon^{-j}\omega, \epsilon}^j\Lambda_{\om,j}h_{\sigma^{-(j+1)}\omega}.
$$
Then 
$$
\|\Delta_{\omega,j}(\epsilon)\|_w\leq Ce^{-\lambda j}.
$$
Now, let us take $N\in\mathbb N$. Then
$$
\sum_{j\geq N}\|\Delta_{\omega,j}(\epsilon)\|_w\leq C'e^{-N\lambda}
$$
where $C'>0$ is a constant.
On the other hand, by \eqref{triple} and \eqref{unifb} there is a  constant $C''>0$ such that
$$
\sum_{j<N}\|\Delta_{\omega,j}(\epsilon)-\Delta_{\omega,j}(0)\|_w\leq 
C''N|\epsilon|^{\zeta}.
$$
Indeed, we have
$$
\mathcal L_{\sigma_\epsilon^{-j}\omega, \epsilon}^j-\mathcal L_{\sigma^{-j}\omega}^j=\sum_{k=0}^{j-1}\mathcal L_{\sigma_\epsilon^{-(j-k-1)}\omega, \epsilon}^{j-k-1}(\mathcal L_{\sigma_\epsilon^{-(j-k)}\om,\epsilon}-\mathcal L_{\sigma^{-(j-k)}\om})\mathcal L_{\sigma^{-j}\omega}^k.
$$
 Using \eqref{dec}, \eqref{triple}, \eqref{1223}, \eqref{unifb}, and the first assumption in the statement of the theorem, we see that
\[
\begin{split}
\left\|\left (\mathcal L_{\sigma_\epsilon^{-j}\omega, \epsilon}^j-\mathcal L_{\sigma^{-j}\omega}^j\right )\Lambda_{\om,j}h_{\sigma^{-(j+1)}\omega}\right \|_w &\le C_0\|\Lambda_{\om,j}\| |\epsilon|^\zeta\sum_{k=0}^{j-1}e^{-(\lambda-c) (j-k-1)}\leq C''|\epsilon|^\zeta
\end{split}
\]
where $C_0,C''$ are two positive constants.
We thus conclude that there is a constant $C'''>0$ such that 
$$
\left\|h_{\om,\epsilon}-h_{\omega}-\epsilon\sum_{j=0}^\infty\Delta_{j,\omega}(0)\right\|_w\leq C'''\left(N|\epsilon|^{1+\zeta}+C'|\epsilon|e^{-N\lambda}\right).
$$
The proof of the theorem is completed by taking
 $N=p|\ln |\epsilon||$ for $p>0$ large enough.
\end{proof}
\begin{remark}\label{Rem1}
  Suppose that \eqref{Lipp} holds with some $c>0$ that is not necessarily smaller than $\lambda/(1+\zeta)$.   Let us take $\alpha\in(0,1]$ so that $c\alpha<\lambda$. Then, by replacing $d(\sigma_\epsilon^{-j}\om,\sigma^{-j}\om)$ and $d(\sigma_\epsilon^{-j}\om,\sigma^{-j}\om)^{1+\zeta}$   with $d(\sigma_\epsilon^{-j}\om,\sigma^{-j}\om)^\alpha$ (using the fact that the metric $d$ is bounded), the proof of Theorem \ref{T3} yields that
  $$
\|h_{\om,\epsilon}-h_\om-\epsilon\Gamma_\om\|_w\leq D|\ln|\epsilon||\left(|\epsilon|^{1+\zeta}+\text{dist}_{C^0}(\sigma_\epsilon^{-1},\sigma^{-1})^\alpha\right)
  $$
  for some constant $D>0$. The case $\alpha=1+\zeta$ puts us in the circumstances of Theorem \ref{T3}, but to get the differentiability of $\epsilon\to h_{\om,\epsilon}$ we only need $\text{dist}_{C^0}(\sigma_\epsilon^{-1},\sigma^{-1})=o(\epsilon^{1/\alpha})$.  
 \end{remark}

 \subsection{Annealed linear response}
 We now give an annealed linear response result.
\begin{maintheorem}\label{T4}
Let the assumptions of Theorems \ref{T2} and \ref{T3} be in force. 
Suppose that for every $f\in\mathfrak F$ the map $\epsilon\to \mathbb P_\epsilon(f)$ is differentiable at $\epsilon=0$.
Let $\Phi:\Omega\times M\to\mathbb R$ be a measurable function such that the functions $\varphi,\theta$ and $\phi$ belong to $\mathfrak F$, where $\varphi$ and $\theta$ were defined in Theorem \ref{T2},
$$
\phi(\omega):=\Gamma_\omega(\Phi (\omega, \cdot))
$$
and $\Gamma_\om$ was defined in Theorem \ref{T3} (recall that  $\Gamma_\om=\frac{d h_{\om,\epsilon}}{d\epsilon}\big|_{\epsilon=0}$ and $\sup_{\om\in\Omega'}\|\Gamma_\om\|_w<\infty$).
Then the following version of the chain rule holds true
$$
\lim_{\epsilon\to 0}\frac{\int_{\Omega \times M}\Phi \, d\mu_\epsilon-\int_{\Omega \times M}\Phi \, d\mu}{\epsilon}=
\int_{\Omega}\phi(\omega)\, d\mathbb P_0(\om)+\mathbb P'_0(\varphi).
$$
\end{maintheorem}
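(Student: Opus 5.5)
We split the difference quotient exactly as in the proof of Theorem~\ref{T2}. Since $\mathbb P_\epsilon(\Omega')=1$ for all $\epsilon$,
\[
\frac{\int\Phi\,d\mu_\epsilon-\int\Phi\,d\mu}{\epsilon}
=\int_{\Omega'}\frac{(h_{\om,\epsilon}-h_\om)(\Phi(\om,\cdot))}{\epsilon}\,d\mathbb P_\epsilon(\om)
+\frac{\mathbb P_\epsilon(\varphi)-\mathbb P_0(\varphi)}{\epsilon}.
\]
The second term converges to $\mathbb P_0'(\varphi)$ directly from the hypothesis, because $\varphi\in\mathfrak F$. So all the work is in the first term, which we expect to converge to $\int\phi\,d\mathbb P_0$.

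For the first term we insert $\epsilon\Gamma_\om$ and write it as
\[
\int_{\Omega'}\frac{(h_{\om,\epsilon}-h_\om-\epsilon\Gamma_\om)(\Phi(\om,\cdot))}{\epsilon}\,d\mathbb P_\epsilon+\int_{\Omega'}\phi\,d\mathbb P_\epsilon .
\]
\emph{First piece.} As in the proof of Theorem~\ref{T2}, \eqref{distrib} (or the $L^\infty$ bound in the density case) gives
\[
|(h_{\om,\epsilon}-h_\om-\epsilon\Gamma_\om)(\Phi(\om,\cdot))|\le C\theta(\om)\|h_{\om,\epsilon}-h_\om-\epsilon\Gamma_\om\|_w .
\]
Theorem~\ref{T3} bounds the $\|\cdot\|_w$-norm by $O(|\ln|\epsilon||\,|\epsilon|^{1+\zeta})$ uniformly in $\om\in\Omega'$. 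Integrating and using $\theta\in\mathfrak F$ together with the bound $|\mathbb P_\epsilon(\theta)|\le R\|\theta\|_{\mathfrak F}$, this piece is $O(|\ln|\epsilon||\,|\epsilon|^{\zeta})\to0$.

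\emph{Second piece.} Since $\phi\in\mathfrak F$, we have $\int\phi\,d\mathbb P_\epsilon\to\int\phi\,d\mathbb P_0$, either from \eqref{1211} via $|\mathbb P_\epsilon(\phi)-\mathbb P_0(\phi)|\le\|\phi\|_{\mathfrak F}\|\mathbb P_\epsilon-\mathbb P_0\|_{\mathfrak F}$, or simply from the differentiability (hence continuity) of $\epsilon\mapsto\mathbb P_\epsilon(\phi)$.

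The main point needing care is the uniformity in $\om$ of the $O(|\ln|\epsilon||\,|\epsilon|^{1+\zeta})$ estimate of Theorem~\ref{T3}. That is what allows integrating against $\theta\,d\mathbb P_\epsilon$ with varying measures. We will check in its proof that all constants (those from \eqref{dec}, \eqref{triple}, \eqref{bound}, \eqref{unifb}, \eqref{Lipp}, and the choice $N=p|\ln|\epsilon||$) are independent of $\om$, which they are. A minor subtlety is that $\theta\ge0$, so to bound $\int\theta\,d\mathbb P_\epsilon$ we use $R$ rather than any signed cancellation. Measurability of $\om\mapsto\Gamma_\om(\Phi(\om,\cdot))$ is supplied by the assumption $\phi\in\mathfrak F$. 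Combining the three limits gives the stated chain rule.
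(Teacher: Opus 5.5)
Your proposal is correct and follows the paper's proof. It uses the same split into the term $\int_{\Omega'}(h_{\om,\epsilon}-h_\om)(\Phi(\om,\cdot))\,d\mathbb P_\epsilon$ plus $\mathbb P_\epsilon(\varphi)-\mathbb P_0(\varphi)$, handles the first via Theorem~\ref{T3} and continuity of $\epsilon\mapsto\mathbb P_\epsilon(\phi)$, and the second via differentiability of $\epsilon\mapsto\mathbb P_\epsilon(\varphi)$. Your explicit bound through $\theta\in\mathfrak F$, the constant $R$, and the $\om$-uniformity of the estimate in Theorem~\ref{T3} spells out the $o(\epsilon)$ step that the paper asserts without detail.
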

\begin{remark}
We note that in the circumstances of Remark \ref{Rem1} we can also prove the annealed linear response when $\text{dist}_{C^0}(\sigma_\epsilon^{-1},\sigma^{-1})=o(\epsilon^{1/\alpha})$.
\end{remark}
\begin{proof}[Proof of Theorem \ref{T4}]
We have 
\[
\begin{split}
& \int_{\Omega \times M}\Phi \, d\mu_\epsilon-\int_{\Omega \times M}\Phi \, d\mu\\
&=\int_{\Omega'} (h_{\omega, \epsilon}-h_\omega)(\Phi(\omega, \cdot))\, d\mathbb P_{\epsilon} +\int_\Omega h_\omega(\Phi(\omega, \cdot)) \,(d\mathbb P_{\epsilon}-d\mathbb P_0)\\
&=:(I)_\epsilon+(II)_\epsilon.
\end{split}
\]
By Theorem~\ref{T3}, 
$$
(I)_\epsilon=\epsilon\int_{\Omega'} \phi\, d\mathbb P_\epsilon+o(\epsilon).
$$
Now, let us write $\mathbb P_\epsilon=\mathbb P_0+O(\epsilon)$. Using that $\phi\in\mathfrak F$, we thus get that 
$$
(I)_\epsilon=\epsilon\int_{\Omega'}\phi\, d\mathbb P_0+o(\epsilon).
$$
Next, let us write $\mathbb P_\epsilon(\cdot)-\mathbb P_0(\cdot)=\epsilon\mathbb P_0'(\cdot)+o(\epsilon)$. Hence,
$$
(II)_\epsilon=\epsilon \mathbb P_0'(\varphi)+o(\epsilon).
$$
The result follows by combining the above estimates.
\end{proof}
\section{Regularity of the equivariant measures with respect to $(\om,\epsilon)$}\label{RegSec}

Let us fix some $s\in\N$ and suppose that $\mathfrak F=C^s(\Omega)$ (we continue to assume that $\Omega$ is a compact Riemannian manifold).
Let $(\mathcal B^{(i)},\|\cdot\|_i), \ i\leq s$ be a decreasing finite sequence of Banach spaces of distributions (or functions on $M$, depending on the case)  and $\|\cdot\|_{i}\leq \|\cdot\|_{i+1}$.

\begin{maintheorem}\label{RegThm}
Suppose: 
\begin{enumerate}
    \item \eqref{dec} holds with respect to the norm on $\mathcal B^{(1)}$;
    
    \item The operator norm of $\mathcal L_{\om,\epsilon}^n$ is uniformly bounded in $\om,n,\epsilon$ for each one of the spaces $\mathcal B^{(i)}$;

    \item $(\om,\epsilon)\to\mathcal L_{\omega,\epsilon}$ is of class $C^s$ when viewed as a map to $\text{Hom}(\mathcal B^{(i)},\mathcal B^{(i-1)})$;

    \item  $\sup_{\epsilon\in I}\|\sigma_{\epsilon}^{-j}(\cdot)\|_{C^s}=O(e^{cj})$ for $c<\lambda/s^2$.

    \item $\sup_{\om\in\Omega}\sup_{\epsilon\in I}\|h_{\om,\epsilon}\|_{\mathcal B^{(1)}}<\infty$.
\end{enumerate}
Then for every $\delta>0$ the map $(\om,\epsilon)\to h_{\omega,\epsilon}$ is of class $C^{s-\delta}$ when viewed as map to $\mathcal B^{(1)}$.
\end{maintheorem}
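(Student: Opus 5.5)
The plan is to represent $h_{\om,\epsilon}$ as an exponentially convergent series of terms that are explicitly $C^s$ in $(\om,\epsilon)$, and then use a standard interpolation argument to trade a little smoothness for summability. Fix a reference density $h_0\in\mathcal B^{(s)}$ with $\psi(h_0)=1$; taking $h_0$ independent of $(\om,\epsilon)$ is the natural choice. Set
\[
H_n(\om,\epsilon):=\mathcal L^n_{\sigma_\epsilon^{-n}\om,\epsilon}h_0 .
\]
Using equivariance and \eqref{preserv}, $h_{\om,\epsilon}-H_n(\om,\epsilon)=\mathcal L^n_{\sigma_\epsilon^{-n}\om,\epsilon}(h_{\sigma_\epsilon^{-n}\om,\epsilon}-h_0)$ has zero $\psi$-mean. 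By \eqref{dec} in $\mathcal B^{(1)}$ and the fifth assumption, it is $O(e^{-\lambda n})$ in $\|\cdot\|_1$, uniformly in $(\om,\epsilon)$. Hence
\[
h_{\om,\epsilon}=H_0+\sum_{n\ge0}\bigl(H_{n+1}-H_n\bigr),
\]
and each increment $H_{n+1}-H_n=\mathcal L^n_{\sigma_\epsilon^{-n}\om,\epsilon}\bigl(\mathcal L_{\sigma_\epsilon^{-(n+1)}\om,\epsilon}h_0-h_0\bigr)$ is applied to a $\psi$-mean-zero vector. Therefore $\|H_{n+1}-H_n\|_{1}\le Ce^{-\lambda n}$ uniformly, i.e. the $C^0$ norms of the terms decay exponentially.

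Next I will bound higher derivatives of $H_n$. Write $H_n$ as the composition $\mathcal L_{\gamma_{n-1}(\om,\epsilon),\epsilon}\circ\cdots\circ\mathcal L_{\gamma_0(\om,\epsilon),\epsilon}h_0$ with $\gamma_k(\om,\epsilon)=\sigma_\epsilon^{k-n}\om$. A derivative of order $i\le s$ in $(\om,\epsilon)$ distributes by the Leibniz and Fa\`a di Bruno rules over at most $i$ factors. Each differentiated factor costs one level in the scale $\mathcal B^{(i)}\to\mathcal B^{(i-1)}$ (third assumption), which is why $h_0$ is taken in $\mathcal B^{(s)}$. The undifferentiated blocks between them are bounded by the second assumption, and the inner chain-rule factors are controlled by derivatives of $\sigma_\epsilon^{-j}$, which are $O(e^{cj})$ by the fourth assumption. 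Since products of up to $i$ such factors appear, together with polynomially many terms, I expect $\|D^iH_n\|_{\mathcal B^{(s-i)}}\le C_i n^{i}e^{i^2 cn}$ or, more crudely, $O(e^{scn\cdot s})$. The same bound holds for $D^i(H_{n+1}-H_n)$, landing at worst in $\mathcal B^{(1)}$ when $i\le s-1$. The case $i=s$ needs one more level, so I will work with $C^{s-1,1}$-type difference quotients, or simply accept $C^{s-\delta}$ via interpolation anyway. Uniformity is the only point; no decay is needed here.

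Finally, I interpolate. For a $C^s$ function $F$ with $\|F\|_{C^0}\le a$ and $\|F\|_{C^s}\le b$, the H\"older norm satisfies $\|F\|_{C^{s-\delta}}\lesssim a^{\delta/s}b^{1-\delta/s}$ (Landau--Kolmogorov/H\"older interpolation, valid for Banach-valued maps on compact manifolds). Apply this to $F_n=H_{n+1}-H_n$ with $a=e^{-\lambda n}$ and $b=\mathrm{poly}(n)e^{s^2cn}$. The resulting bound is $\mathrm{poly}(n)\,e^{-n(\lambda\delta/s-s^2c(1-\delta/s))}$, which is summable precisely when $c$ is small compared with $\lambda$; this is where $c<\lambda/s^2$ enters. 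For given $\delta$ one may have to shrink $\delta$ or apply the interpolation on the $\mathcal B^{(1)}$ norm with the appropriate exponent. Summability gives convergence of the series in $C^{s-\delta}(\,\cdot\,;\mathcal B^{(1)})$, and hence the claimed regularity.

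The main obstacle is the combinatorial derivative estimate. One must check that differentiating the $n$-fold composition really loses only one space level per derivative, and that the growth is $e^{O(s^2)cn}$ rather than worse, so that it matches the hypothesis $c<\lambda/s^2$. The balancing of exponents in the interpolation step needs similar care for each fixed $\delta$.
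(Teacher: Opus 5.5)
\textbf{There is a genuine gap: your argument proves $C^{s-\delta}$ only for $\delta$ above a positive threshold, not for every $\delta>0$.} Your telescoping setup and the bound $\|H_{n+1}-H_n\|_{1}\le Ce^{-\lambda n}$ are correct. The failure is in the interpolation step. Suppose the $C^s$ norms grow like $b_n\approx e^{\kappa n}$. You take $\kappa=s^2c$; the chain rule in fact gives $\kappa=sc$, since the total derivative order is $s$. Then $a_n^{\delta/s}b_n^{1-\delta/s}$ is summable only if $\lambda\delta/s>\kappa(1-\delta/s)$, that is, only for $\delta>s\kappa/(\lambda+\kappa)$.

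The hypothesis $c<\lambda/s^2$ does not remove this threshold. With your $\kappa=s^2c$ it only guarantees summability for $\delta>s/2$. Your remark that one may ``shrink $\delta$'' goes the wrong way: a smaller $\delta$ makes the exponent worse. So $c<\lambda/s^2$ is not what enters your interpolation.

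\textbf{What is missing is a bound on the derivatives of $H_n$ that is uniform in $n$, and this requires using \eqref{dec} inside the derivative estimate.} This is exactly the step you set aside with ``no decay is needed here.''

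Since $\psi\circ\mathcal L_{\gamma,\epsilon}=\psi$, every derivative of $(\om,\epsilon)\mapsto\mathcal L_{\sigma_\epsilon^{-j}\om,\epsilon}$ takes values in $\ker\psi$. Take a term in the Leibniz expansion of $D^sH_n$, and let $\ell$ be the largest index $j$ of a differentiated factor.
\begin{itemize}
\item Through the derivatives of $\sigma_\epsilon^{-\ell}$, that factor costs at most $O(e^{cs\ell})$.
\item It is followed by the $\ell-1$ factors with $j<\ell$, all acting on $\psi$-mean-zero vectors.
\item At most $s-1$ of these are differentiated, so some undifferentiated block has length at least $(\ell-s)/s$ and contributes $Ce^{-\lambda(\ell-s)/s}$ by \eqref{dec}.
\end{itemize}
Each term is therefore $O(e^{-(\lambda/s-cs)\ell})$, and there are $O(\ell^{s-1})$ terms for a given $\ell$. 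The sum over $\ell$ is bounded independently of $n$ exactly when $c<\lambda/s^2$; this is where the hypothesis is actually used.

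With $\sup_n\|H_n\|_{C^s}<\infty$ in hand, your interpolation inequality, now with $b$ bounded, gives $\|H_{n+1}-H_n\|_{C^{s-\delta}}\le Ce^{-\lambda\delta n/s}$. This is summable for every $\delta>0$. This uniform-in-$n$ derivative bound followed by interpolation is the paper's argument.
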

\begin{remark}
In Section \ref{SecDiff}, we will use only with $\epsilon=0$. Thus, in the above theorem we can take $I=\{0\}$, namely assume only that $\om\to\mathcal L_{\om,0}$ is of class $C^s$ and $\|\sigma^{-j}\|_{C^s}=O(e^{cj})$.
\end{remark}
\begin{proof}[Proof of Theorem \ref{RegThm}]
Let $u\in \mathcal B^{(s)}$ be such that $\psi(u)=1$.
Since $h_{\om,\epsilon}$ is a uniform in $\epsilon$  and $\omega$ limit of $\mathcal L_{\sigma_\epsilon^{-n}\om,n}^n u$ in the norm $\|\cdot\|_{\mathcal B^{(1)}}$ 
it is enough to show that the $C^r$ norm of $(\om,\epsilon)\to \mathcal L_{\sigma_\epsilon^{-n}\omega,\epsilon}^nu$ is uniformly bounded in $n$ and $\epsilon$. 
Denote by $\mathcal L_{\sigma_\epsilon^{-j}\omega,\epsilon,k}$ the $k$-th derivative of $(\om,\epsilon)\to \mathcal L_{\sigma_\epsilon^{-j}\omega,\epsilon}$. Then
  the $s$-th derivative of  $(\om,\epsilon)\to \mathcal L_{\sigma_\epsilon^{-n}\omega,\epsilon}^n$  is given by 
$$
\sum_{d=1}^{r}\,\sum_{A\subset N_n: |A|=d}\,\sum_{\sum_{i\in A} k_i=s}\,\prod_{j=1}^{n}\mathcal L_{\sigma_\epsilon^{-j}\omega,\epsilon,k_j}
$$
where $\mathbb N_n=\{1,2,\ldots ,n\}$, $k_i\in\N$ and for a given $A$ we set $k_i=0$ for $i\not\in A$. Moreover, $|A|$ denotes the number of elements of $A$.
Now, since 
$$
\psi(\mathcal L_{\sigma_\epsilon^{-j}\omega,\epsilon}\nu)=\psi(\nu)
$$
for all  $\nu$, 
for all $k>0$  we have 
$$
\psi(\mathcal L_{\sigma_\epsilon^{-j}\omega,\epsilon,k}\nu)=0.
$$
Thus, the image of $\mathcal L_{\sigma_\epsilon^{-j}\omega,\epsilon,k}$ is contained in the kernel of $\psi$.

Next, let us fix $1\leq d\leq s$, $A\subset \mathbb N_n$ with $|A|=d$ and positive integers $k_i, i\in A$ with $\sum_{i}k_i=s$. Let $\ell_A$ be the maximum of $A$. Then 
the total length of the blocks $B$ of operators in the composition $\prod_{j=1}^{n}\mathcal L_{\sigma_\epsilon^{-j}\omega,\epsilon,k_j}$ which appear to the left of $\mathcal L_{\sigma_\epsilon^{-\ell_A}\omega,\epsilon ,k_{\ell_A}}$ for which $k_j=0$ for every $j\in B$ is at least $\ell_A-s$. Moreover, the norm of each $\mathcal L_{\sigma_\epsilon^{-j}\omega,\epsilon,k_j}$ when $k_j>0$ is of order $O(e^{cj})=O(e^{c\ell_A})$.
Thus, by relying on~\eqref{dec} applied with the maximal block to the left of $\ell_A$ which contains only $k_j$'s with $k_j=0$ we see that  there exists a constant $C_s>0$ such that
$$
\left\|\prod_{j=1}^{n}\mathcal L_{\sigma_\epsilon^{-j}\omega,\epsilon,k_j}u\right\|_{\mathcal B^{(1)}}\leq C_se^{-(\lambda/s-cs)\ell_A}.
$$
Therefore, there exists a constant $C_s'>0$ such that
$$
\left\|
\sum_{d=1}^{r}\,\sum_{A\subset N_n: |A|=d}\,\sum_{\sum_{i\in A} k_i=s}\,\prod_{j=1}^{n}\mathcal L_{\sigma_\epsilon^{-j}\omega,\epsilon,k_j}
u\right\|_{\mathcal B^{(1)}}\leq C_s'\sum_{\ell=1}^n \ell^{s-1}e^{-\ell(\lambda/s-cs)}.
$$
\end{proof}

\section{Differentiability of the asymptotic moments in the central limit theorem: a direct approach  when $\mathfrak F=C^{s-\delta}(\Omega)$}\label{SecDiff}
Let $T_{\om,\epsilon}:M\to M$ be measurable maps having $\mathcal L_{\omega,\epsilon}$ as their dual operators with respect to $\psi$, namely
$$
\psi((\mathcal L_{\omega,\epsilon} g)\cdot h)=\psi(g\cdot (f\circ T_{\omega,\epsilon})).
$$
Let $f_\epsilon:\Omega\times M\to \mathbb R$ be a measurable function. In the expanding case (i.e., when $\|\cdot\|_\infty\leq\|\cdot\|_{w}$) we assume that $f_\epsilon(\om,\cdot)\in \mathcal B_w$ and $\sup_{\epsilon,\om}\|f_\epsilon(\om,\cdot)\|_w<\infty$ while in the hyperbolic case (i.e., when $\mathcal B_w$ are distributions or order $r$) we suppose that $f_\epsilon(\om,\cdot)\in C^r(M)$ and $\sup_{\epsilon,\om}\|f_\epsilon(\om,\cdot)\|_{C^r}<\infty$.
In the latter case, we also assume that the multiplication of a function in  $C^r(M)$ induces a bounded linear operator on  $\mathcal B_w$. In the former case, we assume that  multiplication of a function in  $\mathcal B_w$ induces a bounded linear operator on  $\mathcal B_w$.  In both cases, it follows that the operators $\mathcal L_{\omega,\epsilon,z}, z\in\mathbb C$ given by 
$$
\mathcal L_{\omega,\epsilon,z}(g)=\mathcal L_{\omega,\epsilon}(g e^{zf_{\omega,\epsilon}})
$$
are uniformly analytic in $z$ when viewed as linear operators on $\mathcal B_w$. Thus, by applying \cite[Theorem D.2]{DolgHaf PTRF 2025} to the space 
$\mathcal B_w$, it follows that the conditions in  \cite{HafMom} are in force, and so  
for every integer $k\geq 2$  and $\epsilon\in I$ there exists a number $M_{k,\epsilon}$ such that for every $\epsilon\in I$ and for $\mathbb P_\epsilon$-a.e. $\om$ we have
$$
M_{k,\epsilon}=\lim_{n \to\infty}\frac{1}{n^{k/2}}\mathbb E_{h_{\omega,\epsilon}}[(S_{n,\epsilon}^\om f-\mu_{\omega,\epsilon}(f_{\omega,\epsilon}))^k].
$$
with 
$$
S_{n, \epsilon}^\om f=\sum_{j=0}^{n-1}f_{\sigma_\epsilon^j\om,\epsilon}\circ T_{\om, \epsilon}^j
$$
where $f_{\om,\epsilon}(\cdot)=f_\epsilon(\omega,\cdot)$ and
$$
T_{\om, \epsilon}^j=T_{\sigma_\epsilon^{j-1}\omega,\epsilon}\circ\ldots\circ T_{\sigma_\epsilon\omega,\epsilon}\circ T_{\omega,\epsilon}.
$$

Next, by using an approximation argument together with the validity of the CLT (which follows from the arguments in \cite{HK} or \cite{TAMS}) and the existence of the asymptotic moments we get, when $M_{2,\epsilon}>0$, 
$$
\lim_{n\to\infty}(nM_{2,\epsilon})^{-k/2}\mathbb E_{h_{\om,\epsilon}}[(\bar S_{n,\epsilon}^\om f)^k]=\mathbb E[Z^k]
$$
where $Z$ is a standard normal random variable. Thus, 
$$
M_{k,\epsilon}=M_{2,\epsilon}^{k/2}\mathbb E[Z^k]
$$
and so it is enough to show that $\epsilon\to M_{2,\epsilon}$ is differentiable at $\epsilon=0$ and to compute its derivative.

Denote 
$$
\bar f_\epsilon(\om,\cdot)=\bar f_{\om, \epsilon}:=f_{\om,\epsilon}-h_{\om, \epsilon}(f_{\om,\epsilon}).
$$
Then
\begin{equation}\label{Var form}
\begin{split}
\Sigma_{\epsilon}^2=:M_{2,\epsilon}&=\int_\Omega h_{\om, \epsilon}(\bar f_{\om, \epsilon}^2)\,  d\mathbb P_\epsilon+2\sum_{n\geq1}\int_\Omega h_{\om, \epsilon}( \bar f_{\om, \epsilon} (\bar f_{\sigma_\epsilon^n \omega, \epsilon}\circ T_{\om, \epsilon}^n))\,  d\mathbb P_\epsilon \\
&=\int_{\Omega \times M}\bar f_{\epsilon}^2d\mu_\epsilon +2\sum_{n\ge 1}\int_{\Omega \times M} (\bar f_{\epsilon}) \cdot (\bar f_{\epsilon} \circ \tau_\epsilon^n)\, d\mu_\epsilon.
\end{split}
\end{equation}
As in  \cite{TAMS} or \cite{HK}, for  every $\epsilon\in I$ for $\mathbb P_\epsilon$-a.e. $\om$ and we see that $n^{-1/2}S_{n,\epsilon}^\om \bar f_\epsilon$ converges in distribution as $n\to\infty$ to a zero mean normal random variable with variance $\Sigma^2_{\epsilon}$, when considered as a random variable on the probability space $(M, h_{\om,\epsilon})$. Thus, $\Sigma^2_\epsilon$ is the asymptotic variance in the corresponding CLT. 

\begin{maintheorem}\label{DiffThm}
Let conditions of Theorems \ref{T3} and Theorem \ref{RegThm} (with $I=\{0\}$ and be in force. Assume also that $\|\cdot\|_w\leq \|\cdot\|_{\mathcal B_1}$. Further assume that 
$$
\sup_{\epsilon\in I}\|\sigma_{\epsilon}^{j}(\cdot)\|_{C^s}=O(e^{cj})
$$
 for $c<\lambda/s$. Let $\mathfrak F=C^{s-\delta}(\Omega)$ and suppose that,
$$
\sup_{g\in\mathfrak F: \|g\|_{\mathfrak F}\leq 1}|(\mathbb P_{\epsilon}-\mathbb P_0-\epsilon\mathbb P'_0)(g)|=o(\epsilon).
$$
Further assume that $(\om,\epsilon)\to f_{\om,\epsilon}$ is of class $C^{s-\delta}$ (as a map to either $\mathcal B_w$ or $C^r(M)$).

Then, the
function $\epsilon\to \Sigma_\epsilon^2$ is differentiable at $\epsilon=0$. In addition, 
$$
d:=\frac{d\Sigma_{\epsilon}^2}{d\epsilon}\Big|_{\epsilon=0}
$$
is given by differentiating each one of the summands in~\eqref{Var form} separately. 
\end{maintheorem}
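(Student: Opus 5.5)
We will work term by term in the Green--Kubo formula \eqref{Var form}. Rewrite each correlation using transfer operators:
\[
\int_\Omega h_{\om,\epsilon}\big(\bar f_{\om,\epsilon}\,(\bar f_{\sigma_\epsilon^n\om,\epsilon}\circ T^n_{\om,\epsilon})\big)\,d\mathbb P_\epsilon
=\int_\Omega \psi\big(\bar f_{\sigma_\epsilon^n\om,\epsilon}\cdot\mathcal L^n_{\om,\epsilon}(\bar f_{\om,\epsilon}h_{\om,\epsilon})\big)\,d\mathbb P_\epsilon(\om)=:\int_\Omega c_{n,\epsilon}(\om)\,d\mathbb P_\epsilon .
\]
The plan has two parts. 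First, show that for each fixed $n$ the map $\epsilon\mapsto\int c_{n,\epsilon}\,d\mathbb P_\epsilon$ is differentiable at $0$, with the derivative given by the chain rule of Theorem~\ref{T4}. Second, show that the difference quotients
\[
\epsilon^{-1}\Big(\int c_{n,\epsilon}\,d\mathbb P_\epsilon-\int c_{n,0}\,d\mathbb P_0\Big)
\]
are summable in $n$ uniformly in small $\epsilon$, so that dominated convergence for series gives differentiability of $\Sigma^2_\epsilon$ with $d$ equal to the termwise sum.

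For fixed $n$ we split, as in the proof of Theorem~\ref{T4}, into $(I)_\epsilon=\int(c_{n,\epsilon}-c_{n,0})\,d\mathbb P_\epsilon$ and $(II)_\epsilon=(\mathbb P_\epsilon-\mathbb P_0)(c_{n,0})$.

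For $(II)_\epsilon$ we need $c_{n,0}\in\mathfrak F=C^{s-\delta}(\Omega)$ with a norm bound. This follows from Theorem~\ref{RegThm}, since $\om\mapsto h_\om$ is $C^{s-\delta}$ into $\mathcal B^{(1)}$. Combined with the $C^s$ regularity of $\om\mapsto\mathcal L_\om$, of $\om\mapsto f_{\om,0}$, and the bound $\|\sigma^{j}\|_{C^s}=O(e^{cj})$, the derivatives of $\om\mapsto c_{n,0}(\om)$ are computed by the same Leibniz expansion as in the proof of Theorem~\ref{RegThm}. Every derivative of $\mathcal L_{\sigma^j\om}$ maps into $\ker\psi$, and $\bar f_{\om,0}h_\om$ has $\psi$-mean zero. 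Using \eqref{dec} on the longest block of undifferentiated operators, we aim for
\[
\|c_{n,0}\|_{C^{s-\delta}}\le C e^{-(\lambda/s-cs)n}.
\]
The hypothesis $\sup_{\|g\|\le1}|(\mathbb P_\epsilon-\mathbb P_0-\epsilon\mathbb P_0')(g)|=o(\epsilon)$ then gives
\[
(II)_\epsilon=\epsilon\,\mathbb P_0'(c_{n,0})+\|c_{n,0}\|_{\mathfrak F}\,o(\epsilon),
\]
and this is summable in $n$.

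For $(I)_\epsilon$ we expand $c_{n,\epsilon}-c_{n,0}$ telescopically. The pieces are:
\begin{itemize}
\item the change $f_{\cdot,\epsilon}-f_{\cdot,0}$, which is $O(\epsilon)$ by $C^{s-\delta}$ regularity in $\epsilon$, and also the change of base point $\sigma^n_\epsilon\om$ versus $\sigma^n\om$, which is $O(e^{cn}\epsilon)$;
\item $h_{\om,\epsilon}-h_\om=\epsilon\Gamma_\om+O_w(|\ln|\epsilon||\,|\epsilon|^{1+\zeta})$ by Theorem~\ref{T3};
\item $\mathcal L^n_{\om,\epsilon}-\mathcal L^n_\om$, expanded as the sum $\sum_k\mathcal L^{n-k-1}(\mathcal L_{\cdot,\epsilon}-\mathcal L_{\cdot})\mathcal L^k$ used in the proof of Theorem~\ref{T3}, together with the derivative expansion \eqref{816}.
\end{itemize}
Each piece gives an $\epsilon$-linear term plus a remainder. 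The mean-zero structure lets us apply \eqref{dec} to get decay $e^{-\lambda(n-k)}$ or $e^{-\lambda k}$ on the factor multiplying the perturbation. The perturbations along orbits grow at most like $e^{c j}$, and since $c<\lambda/s$, and hence $c<\lambda$, the products stay summable.

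We expect the main obstacle to be this uniform-in-$n$ control in $(I)_\epsilon$. In the term where $\mathcal L_{\sigma_\epsilon^k\om,\epsilon}-\mathcal L_{\sigma^k\om}$ sits in the middle of the composition, the function it acts on is $\mathcal L^k_\om(\bar f h)$, which has mean zero but needs $\mathcal B_{ss}$ regularity to apply \eqref{816}. The operator to its left decays in $\|\cdot\|_w$ only after the perturbation has destroyed the mean zero property; \eqref{816} restores it, since the derivative maps into $\mathcal B^0_s$. We would try to get $e^{-\lambda(n-k)}$ from the left block via \eqref{dec} and use \eqref{9:29} for the right block. This gives a bound per term of order $|\epsilon|e^{-(\lambda-c)(n-k)}$ with remainders $|\epsilon|^{1+\zeta}e^{(1+\zeta)ck}$. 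If that does not give summability in $n$, we fall back on a cutoff: split $n\le N=p|\ln|\epsilon||$ from $n>N$. For $n>N$ we bound both correlations crudely by $e^{-\lambda n}$, which contributes $O(\epsilon^{2})$ for $p$ large. For $n\le N$ the accumulated remainders are $O(N^2|\epsilon|^{1+\zeta'})=o(\epsilon)$ for some $\zeta'>0$, exactly as at the end of the proof of Theorem~\ref{T3}. Finally, the $n=0$ term $\int\bar f_\epsilon^2\,d\mu_\epsilon$ is handled directly by Theorem~\ref{T4}, applied with $\Phi=\bar f_0^2$ plus an $O(\epsilon)$ correction coming from the derivative of $\bar f_\epsilon$.
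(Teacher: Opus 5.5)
Your decomposition matches the paper's: your $(II)_\epsilon$ is its $d_{4,n}$, your telescoped $(I)_\epsilon$ is $d_{1,n}+d_{2,n}+d_{3,n}$, and the $n=0$ term is its $I_1+I_2+I_3$. However, the step you flag yourself, the operator term $d_{3,n}$, is not closed, and the claim that $d$ is obtained by differentiating each summand rests on it.

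\textbf{The bound on $d_{3,n}$ is off.} If the right block $\mathcal L_\om^k(\bar f_\om h_\om)$ is only bounded via \eqref{9:29}, the $k$-th telescoping term is of order $|\epsilon|\,e^{ck}e^{-\lambda(n-k-1)}$, not $|\epsilon|e^{-(\lambda-c)(n-k)}$. The factor $e^{ck}$ comes from the base-point displacement $d(\sigma_\epsilon^k\om,\sigma^k\om)=O(e^{ck}|\epsilon|)$ at the position of the perturbation. This is the same factor you correctly put into the remainder $|\epsilon|^{1+\zeta}e^{(1+\zeta)ck}$. The decay comes only from the left block of length $n-k-1$. Summing over $k<n$ gives order $|\epsilon|e^{cn}$. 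Even your own per-term bound sums to $O(|\epsilon|)$ with no decay in $n$.

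So the difference quotients are not dominated by a summable sequence. More seriously, nothing shows that $\sum_n C_n'(0)$ converges, and that is exactly what the statement about $d$ asserts. Note also that \eqref{9:29} controls $\|\cdot\|_s$, whereas \eqref{816} needs $\|\mathcal L^k_\om(\bar f_\om h_\om)\|_{ss}$.

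\textbf{The cutoff fallback does not repair this.} Cutting at $N=p|\ln|\epsilon||$ controls only the tail of the difference quotients, not the tail of $\sum_n C_n'(0)$. For $n\le N$, the remainders carry $e^{(1+\zeta)cN}=|\epsilon|^{-(1+\zeta)cp}$, and the tail forces $p>1/\lambda$. After dividing by $\epsilon$, the remainders are therefore $O(|\epsilon|^{\zeta-(1+\zeta)cp})$. This tends to $0$ only if $c<\zeta\lambda/(1+\zeta)$, which is not assumed. Your bound $O(N^2|\epsilon|^{1+\zeta'})$ silently drops these factors.

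\textbf{What is missing} is exponential decay of the mean-zero right block in the strong norm: $\|\mathcal L_\om^k(\bar f_\om h_\om)\|_{ss}\le Ce^{-\lambda k}$, i.e.\ \eqref{dec} in $\mathcal B_{ss}$ (and $\mathcal B_s$), not merely the boundedness \eqref{9:29}. The paper implicitly imports this when it defers $d_{2,n}$ and $d_{3,n}$ to the fixed-base argument of Dragi\v{c}evi\'c--Sedro. It is available in the examples: for Anosov perturbations the paper notes that \eqref{dec} holds with $\|\cdot\|_s$. In general it follows from Lasota--Yorke inequalities for the pairs $(\mathcal B_w,\mathcal B_s)$ and $(\mathcal B_s,\mathcal B_{ss})$ combined with the weak decay \eqref{dec}.

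With this two-sided decay:
\begin{itemize}
\item the $k$-th term becomes $O(|\epsilon|e^{ck}e^{-\lambda n})$, and its remainder $O(|\epsilon|^{1+\zeta}e^{(1+\zeta)ck}e^{-\lambda n})$;
\item summing over $k<n$ gives $O(|\epsilon|e^{-(\lambda-c)n})$ and $O(|\epsilon|^{1+\zeta}e^{-(\lambda-(1+\zeta)c)n})$;
\item both are summable because $(1+\zeta)c<\lambda$.
\end{itemize}
The termwise limits are then dominated, $\sum_n C_n'(0)$ converges, and no logarithmic cutoff is needed.
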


\begin{proof}[Proof of Theorem \ref{DiffThm}]

Let us first deal with the  term  
$$
d_0(\epsilon):=\int_{\Omega \times M}\bar f_\epsilon^2 \, d\mu_\epsilon.
$$
We have 
\[
\begin{split}
(d_0(\epsilon)-d_0(0))/\epsilon &=
\int_\Omega \epsilon^{-1}(h_{\om, \epsilon}-h_{\om, 0})(\bar f_{\om, \epsilon}^2)\, d\mathbb P_\epsilon(\om)+\int_\Omega h_{\om, 0}(\epsilon^{-1} (\bar f_{\om, \epsilon}^2-\bar f_{\om, 0}^2))\, d\mathbb P_\epsilon(\om) \\
&+\int_\Omega h_{\om, 0}(\bar f_{\om, 0}^2)\, \epsilon^{-1}(d\mathbb P_\epsilon(\om)-d\mathbb P_0(\omega))=:I_1(\epsilon)+I_2(\epsilon)+I_3(\epsilon).
\end{split}
\]
The limit as $\epsilon\to 0$ of $I_1(\epsilon)+I_2(\epsilon)$ is computed similarly to \cite{DS}.  More precisely, we have 
\[
\left |I_1(\epsilon)-\int_\Omega \Gamma_\omega(\bar f_{\omega, \epsilon}^2)\, d\mathbb P_\epsilon(\omega)\right |\le C|\epsilon|,
\]
where $C>0$ is independent of $\epsilon$ and $\omega$  and $\Gamma_\omega=\frac{dh_{\omega, \epsilon}}{d\epsilon}\big \rvert_{\epsilon=0}$.
Writing 
\[
\bar f_{\omega, \epsilon}-\bar f_{\omega, 0}=(f_{\omega, \epsilon}-f_{\omega, 0})+h_{\omega, \epsilon}(f_{\omega, 0}-f_{\omega, \epsilon})+(h_{\omega, 0}-h_{\omega, \epsilon})(f_{\omega, 0}),
\]
we see that 
\[
\lim_{\epsilon \to 0}\left |\int_\Omega \Gamma_\omega(\bar f_{\omega, \epsilon}^2)\, d\mathbb P_\epsilon(\omega)-\int_\Omega \Gamma_\omega(\bar f_{\omega, 0}^2)\, d\mathbb P_\epsilon(\omega)\right |=0.
\]
Consequently, 
\[
\lim_{\epsilon \to 0}I_1(\epsilon)=\int_\Omega \Gamma_\omega(\bar f_{\omega, 0}^2)\, d\mathbb P_0(\omega).
\]
Similarly,
\[
\begin{split}
\lim_{\epsilon \to 0}I_2(\epsilon) &=2\int_\Omega h_{\omega, 0}\left (\frac{df_{\omega, \epsilon}}{d\epsilon}\bigg \rvert_{\epsilon=0}\bar f_{\omega, 0}\right )\, d\mathbb P_0(\omega)-2\int_\Omega h_{\omega, 0}\left(\frac{df_{\omega, \epsilon}}{d\epsilon}\bigg \rvert_{\epsilon=0}\right )h_{\omega, 0}(\bar f_{\omega, 0})\, d\mathbb P_0(\omega)\\
&\phantom{=}-2\int_\Omega \Gamma_\omega(f_{\omega, 0})h_{\omega, 0}(\bar f_{\omega, 0})\, d\mathbb P_0(\omega).
\end{split}
\]
The existence of the limit of $I_3(\epsilon)$ when $\epsilon\to 0$ relies on the linear response assumption on $\epsilon\to \mathbb P_\epsilon$ together the fact that $\om\to h_{\om, 0}(\bar f_{\om, 0}^2)$ is of class $C^{s-\delta}$, which follows from the assumptions regarding $f$ and Theorem \ref{RegThm}.

Now, let us deal with the second term 
$$
\sum_{n\ge 1}\int_{\Omega \times M} h_{\om,\epsilon}(\bar f_{\om, \epsilon}  \cdot ( \bar f_{\sigma_\epsilon^n\om,\epsilon} \circ T_{\om,\epsilon}^n))\, d\mathbb P_\epsilon(\om).
$$
Let us denote 
$$
C_n(\epsilon):=\int_{\Omega \times M} h_{\om,\epsilon}(\bar f_{\om,\epsilon}  \cdot ( \bar f_{\sigma_\epsilon^n\om,\epsilon} \circ T_{\om,\epsilon}^n))\, d\mathbb P_\epsilon(\om)=
\int_\Omega (\mathcal L_{\om,\epsilon}^n(\bar f_{\om,\epsilon}h_{\om,\epsilon}))(\bar f_{\sigma_\epsilon^n\om,\epsilon}) d\mathbb P_\epsilon(\om)
$$
and 
$$
D_n(\epsilon):=\frac{C_n(\epsilon)-C_n(0)}{\epsilon}.
$$
Let us write
$$
D_n(\epsilon)=d_{1,n}(\epsilon)+d_{2,n}(\epsilon)+d_{3,n}(\epsilon)+d_{4,n}(\epsilon)
$$
where 
$$
d_{1,n}(\epsilon)=\int_\Omega (\mathcal L_{\omega,\epsilon}^n(\bar f_{\om, \epsilon}h_{\om, \epsilon}))(\epsilon^{-1}[\bar f_{\sigma_\epsilon^n\om, \epsilon}-\bar f_{\sigma^n\om, 0}]) \, d\mathbb P_\epsilon(\om),
$$
$$
d_{2,n}(\epsilon)=\int_\Omega (\mathcal L_{\om, \epsilon}^n(\epsilon^{-1}(\bar f_{\om, \epsilon}h_{\om, \epsilon}-\bar f_{\om, 0}h_{\om, 0}))(\bar f_{\sigma^n\om, 0})\, d\mathbb P_\epsilon(\om),
$$
$$
d_{3,n}(\epsilon)=
\int_\Omega ([\epsilon^{-1}(\mathcal L_{\om, \epsilon}^n-\mathcal L_{\omega}^n)](h_{\om, 0}\bar f_{\om, 0}))(\bar f_{\sigma^n\om, 0})\, d\mathbb P_\epsilon(\om)
$$
and 
$$
d_{4,n}(\epsilon)=
\int_\Omega (\mathcal L_{\omega}^n(\bar f_{\om, 0}h_{\om, 0}))(\bar f_{\sigma^n\om, 0})\, \epsilon^{-1}(d\mathbb P_\epsilon(\om)-d\mathbb P_0(\om)).
$$
The terms $d_{2,n}(\epsilon)$ and $d_{3,m}(\epsilon)$ can be analyzed similarly to  \cite[proof of Theorem 17]{DS}.
Let us now analyze the term $d_{4,n}(\epsilon)$. We first note that by the assumptions of the theorem together with  Theorem \ref{RegThm} and an modification of the arguments in the proof of Theorem \ref{RegThm} the $C^{r-\delta}$ norms of the functions $\om\to (\mathcal L_{\omega}^n(\bar f_{\om, 0}h_{\om, 0}))(\bar f_{\sigma^n\om, 0})$ are  uniformly bounded. Thus, we can differentiate $d_{4,n}$ at $\epsilon=0$ and sum up the derivatives, using that $d_{4,n}(\epsilon)=O(e^{-\lambda n})$ uniformly in $\epsilon$.

Next, let us analyze the terms $d_{1,n}(\epsilon)$. Let us first show that 
\begin{equation}\label{Show}
\sup_{\epsilon\in I}|d_{1,n}(\epsilon)|=O(\delta^n)    
\end{equation}
for some $\delta\in(0,1)$. Once this is proven we can just sum up  the limits as $\epsilon$ of each $d_{1,n}(\epsilon)$ and get the desired result. To prove \eqref{Show}, by the mean value theorem we have 
$$
\epsilon^{-1}[\bar f_{\sigma_\epsilon^n\om, \epsilon}-\bar f_{\sigma^n\om, 0}]=O_w(1+d(\sigma_\epsilon^n\om,\sigma^n\om)).
$$
Now, arguing like in \eqref{1223} we see that 
$$
d(\sigma_\epsilon^n\om,\sigma^n\om)=O(e^{cn})
$$
uniformly in $\epsilon$ and $\om$. Using \eqref{dec} and that $c<\lambda$ we obtain \eqref{Show}.
\end{proof}

\section{Examples of maps and varying base maps}\label{Examples}
\subsection{The maps $T_{\om,\epsilon}$}

\subsubsection{Expanding maps}

\paragraph{\textbf{The maps and their transfer operator cocycles: condition \eqref{dec}.}}
Let $I$ be an open interval around $0$ in $\mathbb R$.
Let  $T_{\omega,\epsilon} \colon M \to M$, $\omega \in \Omega, \epsilon\in I$ be a collection of non-singular  transformations (i.e.\ $m\circ T_{\omega,\epsilon}^{-1}\ll m$ for each $\omega$) acting   on $M$.
 Each transformation $T_{\omega,\epsilon}$ induces the corresponding transfer operator $\mathcal L_{\omega,\epsilon}$ acting on $L^1(M, m)$ and defined by the following duality relation
\[
\int_M(\mathcal L_{\omega,\epsilon} \phi)\psi \, dm=\int_M\phi(\psi \circ T_{\omega,\epsilon})\, dm, \quad \phi \in L^1(M, m), \ \psi \in L^\infty(M, m).
\]

Let $\alpha\in(0,1]$ and denote by $\var_\alpha(g)\in [0, \infty]$ the H\"older constant corresponding to the exponent $\alpha$ of a function $g:M\to\mathbb R$. Denote 
$$
\|g\|_\alpha=\|g\|_\infty+\var_\alpha(g).
$$
\begin{definition}[Admissible cocycle] \label{def:admis}
We call the family $(\mathcal L_{\omega, \epsilon})_{\omega \in \Omega}$, $\epsilon \in I$ of  transfer operator cocycles  uniformly   
admissible if the following conditions hold:
\begin{enumerate}
\item \label{cond:unifNormBd}
there exists $K>0$ such that for every $\omega \in \Omega$ and $\epsilon\in I$,
\begin{equation*}
 \lVert \mathcal L_{\omega,\epsilon} f\rVert_{\alpha
 } \le K\lVert f\rVert_{\alpha}, \quad \text{for every $f$ such that $\lVert f\rVert_{\alpha}<\infty$;}
\end{equation*}

\item   there are constants $C>0$, $N\in\mathbb N$ and $0<\rho<1$ such that for every $\omega \in \Omega$ and $\epsilon\in I$,
\begin{equation}\label{LY}
\var_\alpha\left(\mathcal L_{\omega,\epsilon}^{n} (f)\right)\leq \rho\var_\alpha(f)+C\|f\|_{L^1(m)}
\end{equation}
for every  $f$ such that $\lVert f\rVert_{\alpha}<\infty$,
where  $\mathcal L_{\omega, \epsilon}^n$ is given by~\eqref{compos};

\item  \label{Min} there exist $c>0$ such that for each $a>0$  and any sufficiently large $n\in \mathbb N$,
\begin{equation*}
\essinf  \mathcal L_{\omega,\epsilon}^{n} f\ge c \lVert f\rVert_{L^1(m)}, \quad \text{for every $f\in C_a, \epsilon\in I, \omega\in\Omega$,}
\end{equation*}
where $C_a:=\{ f\in L^1(m) : f\ge 0 \text{ and } \var_\alpha(f)\le a\int f\, dm \}.$
\end{enumerate}
\end{definition}
We refer to \cite{DolgHaf PTRF 2025} for examples of maps satisfying the above conditions. 
This includes already expanding maps $T_{\om,\epsilon}\colon M\to M$ satisfying the conditions of \cite[Ch. 5]{HK} with the same constants. That is, let $\rho$ be a Riemannian metric on $M$. 
This means that there exist constants $\gamma>1$, $\xi>0$, and $D,N_0\in\mathbb N$ such that:
\begin{enumerate}
 \item[(i)] \textbf{Local expansion. }For every $\om\in\Omega$, $\epsilon\in I$, and all $x,y\in M$ such that $\rho(x,y)\leq\xi$ we can write 
$$
(T_{\om, \epsilon})^{-1}(\{x\})=\{x_1,\ldots ,x_k\},\,\,\, (T_{\om, \epsilon})^{-1}(\{y\})=\{y_1, \ldots ,y_k\}
$$
with $k\leq D$ and for each $1\le i\le k$ we have 
$$
\rho(x_i,y_i)\leq \gamma^{-1}\rho(x,y).
$$
\item[(ii)]\textbf{Covering. } For every $x\in M$, $\om\in\Omega$, and $\epsilon\in I$ we have 
$$
T_{\om,\epsilon}^{N_0}(B(x,\xi))=M,
$$
where $B(x,\xi)$ is the open ball around $x$ with radius $\xi$.
\end{enumerate}
Note that when $\xi>\text{diam}(M)$ we can drop the assumption that the degree is finite, that is, we allow the inverse images of the singletons to be countable. Moreover, the operator $\mathcal L_{\om,\epsilon}$ is given by 
$$
\mathcal L_{\om,\epsilon}g(x)=\sum_{y_{\om,\epsilon}}J(y_{\om,\epsilon}(x))g(y_{\om,\epsilon}(x))
$$
where the sum is over all the inverse branches of $T_{\omega,\epsilon}$ and $J(y)$ is the Jacobian of $y$. In the above case 
we take $\alpha$ such that $\sup_{\omega,\epsilon}\|\ln(J(T_{\om,\epsilon}))\|_{\alpha}<\infty$ (assuming such one exists). Here $J(T)$ is the Jacobian of a map $T\colon M\to M$.

Arguing like in the proof of \cite[Theorem 2.4]{DolgHaf PTRF 2025} we see that there exist strictly positive functions $v_{\omega,\epsilon}\colon M\to\mathbb R$ with uniformly bounded $\|\cdot\|_\alpha$ norms such that $\mathcal L_{\omega,\epsilon}v_{\omega,\epsilon}=v_{\sigma_\epsilon\omega,\epsilon}$ for $\omega \in \Omega$ and $\epsilon \in I$. Thus, the underlying measures are $h_{\omega,\epsilon}=v_{\omega,\epsilon}\,dm$.
Moreover, there exist constants $C, \lambda>0$ such that for all $\epsilon\in I$ and $\omega\in\Omega$ we have
\begin{equation}\label{decAp}
\|\mathcal L_{\omega,\epsilon}^n-m\otimes v_{\sigma_\epsilon^n\omega,\epsilon}\|_{\alpha}\leq Ce^{-\lambda n}.  
\end{equation}
This shows that \eqref{dec} holds  with $\|\cdot\|_w=\|\cdot\|_{\alpha}$.

\paragraph{\textbf{Continuity and differentiability of the transfer operator with respect to $\epsilon$.}}

First, we discuss the uniform boundedness of the functions $v_{\om,\epsilon}$ in appropriate norms. 
\begin{lemma}
Take $m\in\mathbb N$, $m\geq 2$.
Suppose that there are constants $c>0$ and $\gamma>1$ such that 
$$
\sup_{\omega\in\Omega}\sup_{\epsilon\in I}\|D(T_{\omega,\epsilon})\|_{C^m}\leq c
$$
and that
for all inverse branches $y$ of $T_{\omega,\epsilon}$ we have $\|Dy\|_{\infty}\leq \gamma^{-1}$.
 Then for every $\delta\in(0,1)$\, \eqref{9:29} holds with the norm $C^m$ and,
$$
\sup_{\omega\in\Omega}\sup_{\epsilon\in I}\|v_{\omega,\epsilon}\|_{C^{m-\delta}}<\infty.
$$
\end{lemma}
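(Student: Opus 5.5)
Throughout, $\mathcal L_{\om,\epsilon}g=\sum_y J(y)\,g\circ y$, the sum running over the inverse branches $y$ of $T_{\om,\epsilon}$. The plan is a uniform Lasota--Yorke inequality in the scale $C^{k}\to C^{k-1}$, $k\le m$, from which both claims follow.

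\textbf{Step 1: one-step inequality.} Differentiate $\mathcal L_{\om,\epsilon}g$ $k$ times with the chain rule. The only term carrying the top derivative is
\[
\sum_y J(y)\,(D^kg\circ y)\,(Dy)^{\otimes k}.
\]
Since $\|Dy\|_\infty\le\gamma^{-1}$ and $\mathcal L_{\om,\epsilon}1$ is uniformly bounded, this term is bounded by $C_0\gamma^{-k}\|D^kg\|_\infty$. Every other term involves at most $k-1$ derivatives of $g$, multiplied by derivatives of $J\circ y$ and of $y$ of order at most $k$.

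These coefficients are controlled as follows. The bound $\sup\|DT_{\om,\epsilon}\|_{C^m}\le c$ gives control of $T$ in $C^{m+1}$. By the inverse function theorem, using the uniform lower bound on $DT$ coming from the contraction of the branches, the branches $y$ have $C^{m+1}$ norms bounded uniformly in $(\om,\epsilon)$. The Jacobian $J(y)=|\det Dy|$ is therefore $C^m$ with uniform bounds.

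This yields, uniformly in $\om,\epsilon$,
\[
\|D^k\mathcal L_{\om,\epsilon}g\|_\infty\le C_0\gamma^{-k}\|D^kg\|_\infty+C_k\|g\|_{C^{k-1}},\qquad k\le m.
\]

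\textbf{Step 2: iteration.} Iterate along the cocycle $\mathcal L^n_{\om,\epsilon}$, applying the chain rule to the composed inverse branches $y_1\circ\cdots\circ y_n$, whose derivative is bounded by $\gamma^{-n}$. The leading term is bounded by $C_0\gamma^{-kn}$; the factor $C_0$ does not grow, because it comes from $\mathcal L^n 1$, which is uniformly bounded by the $C^0$ version of the same inequality.

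The lower order terms telescope into a geometric sum. This gives
\[
\|D^k\mathcal L^n_{\om,\epsilon}g\|_\infty\le C_0\gamma^{-kn}\|D^kg\|_\infty+C'_k\|g\|_{C^{k-1}}.
\]
Inducting on $k$ from $k=0$ (where $\|\mathcal L^n\|_{C^0}$ is uniformly bounded) yields $\sup_n\|\mathcal L^n_{\om,\epsilon}\|_{C^m}\le K$. This is \eqref{9:29} in the $C^m$ norm.

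\textbf{Step 3: regularity of $v_{\om,\epsilon}$.} Take $u\equiv1$. By \eqref{decAp} and $\int\mathcal L^n u\,dm=1$,
\[
v_{\om,\epsilon}=\lim_{n\to\infty}\mathcal L^n_{\sigma_\epsilon^{-n}\om,\epsilon}1 \quad\text{in }\|\cdot\|_\alpha.
\]
By Step 2, the approximants are bounded in $C^m$ uniformly in $n,\om,\epsilon$. Interpolation between $C^\alpha$ and $C^m$ shows the sequence is Cauchy in $C^{m-\delta}$: the $C^\alpha$ distance decays exponentially while the $C^m$ norm stays bounded. Hence the limit $v_{\om,\epsilon}$ lies in $C^{m-\delta}$, with norm bounded uniformly in $\om,\epsilon$.

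One could instead use Arzelà--Ascoli to get $v_{\om,\epsilon}\in C^{m-1,1}$ directly. The interpolation route is the one that matches the stated $C^{m-\delta}$.

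\textbf{Main obstacle.} The delicate part is the combinatorics of Step 2 for the composed branches. One has to bound the higher derivatives of $J(y_1\circ\cdots\circ y_n)$, which is a product of $n$ factors, uniformly in $n$. I would handle this with the standard distortion argument: $\log J$ of the composition is the sum $\sum_i\log J(y_i)\circ(y_{i+1}\circ\cdots)$. Each term has $C^j$ norm $O(\gamma^{-(n-i)})$ for $j\ge1$, by the Faà di Bruno formula together with the contraction of the inner composition. Summing a geometric series then gives uniform $C^m$ bounds on $\log J$ of the composition, and hence, after exponentiating, on $J$ of the composition.
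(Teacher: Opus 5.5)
Your plan follows essentially the paper's route. You get a uniform $C^m$ bound for $\mathcal L^n_{\om,\epsilon}$ from derivative estimates on compositions of inverse branches and their Jacobians, then a uniform bound on $\|\mathcal L^n_{\om,\epsilon}1\|_\infty$, and then pass to the limit $v_{\om,\epsilon}=\lim_n\mathcal L^n_{\sigma_\epsilon^{-n}\om,\epsilon}1$, trading $C^m$ bounds for $C^{m-\delta}$ convergence. The paper does not carry out your Steps 1--2. Instead it imports from \cite[Lemmata 19--22, Corollary 23]{DH CMP} the estimate $\|\mathcal L^n_{\sigma_\epsilon^{-n}\om,\epsilon}\|_{C^m}\le C_m\|\mathcal L^n_{\sigma_\epsilon^{-n}\om,\epsilon}1\|_\infty$. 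Your composition bounds give exactly this: $|D^\ell y^n|\le C_\ell\gamma^{-n}$ and $|D^\ell J(y^n)|\le C_\ell J(y^n)$ for $1\le\ell\le m$ yield $\|\mathcal L^n g\|_{C^m}\le C\|\mathcal L^n1\|_\infty\|g\|_{C^m}$ directly, with no telescoping or induction on $k$ needed. Note that only the derivatives of order $\ge1$ of $\log J(y^n)$ are uniformly bounded. What exponentiating gives, and what you need since the number of branches grows exponentially, is the relative bound $|D^\ell J(y^n)|\le C_\ell J(y^n)$, not a $C^m$ bound on $\log J(y^n)$ itself.

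One step is mis-justified. You need a uniform bound on $\|\mathcal L^n_{\om,\epsilon}1\|_\infty$: it is both your base case $k=0$ and the leading constant in Step 2. It does not follow from the $C^0$ version of the one-step inequality. Iterating $\|\mathcal Lg\|_\infty\le\|\mathcal L1\|_\infty\|g\|_\infty$ only gives $\|\mathcal L^n1\|_\infty\le C_0^n$, and $C_0\ge1$ because $\int\mathcal L1\,dm=1$. Use your own distortion estimate instead: $|D\mathcal L^n1|\le\sum_{y^n}|DJ(y^n)|\le C_1\mathcal L^n1$. Since $\int\mathcal L^n1\,dm=1$, this gives $\sup\mathcal L^n1\le e^{C_1\mathrm{diam}(M)}\inf\mathcal L^n1\le e^{C_1\mathrm{diam}(M)}$. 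Alternatively, as the paper does, apply the Lasota--Yorke inequality \eqref{LY} to $f=1$. With this repair the argument is complete, including the interpolation in Step 3.
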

\begin{proof}
Since $v_{\om,\epsilon}$ is a uniform limit of $\mathcal L_{\sigma_{\epsilon}^{-n}\omega,\epsilon}^n\textbf{1}$, we see that $\sup_{\om,\epsilon}\|h_{\om,\epsilon}\|_{C^{m-\delta}}<\infty$ if $\mathcal L_{\sigma_{\epsilon}^{-n}\omega,\epsilon}^n\textbf{1}$ are uniformly bounded in the $\|\cdot\|_{C^{m}}$ norm for some $\delta>0$.  

Now, arguing like in the proof of \cite[Lemmata 19-22]{DH CMP} and \cite[Corollary 23]{DH CMP} in the case that all the random variables appearing there are bounded, we see that 
$$
\|\mathcal L_{\sigma_{\epsilon}^{-n}\omega,\epsilon}^n\|_{C^m}\leq C_m\|\mathcal L_{\sigma_{\epsilon}^{-n}\omega,\epsilon}^n\textbf{1}\|_{\infty}.
$$
Finally, by the Lasota-Yorke inequality \eqref{LY} we see that $\|\mathcal L_{\sigma_{\epsilon}^{-n}\omega,\epsilon}^n\textbf{1}\|_{\infty}$ is uniformly bounded. 
\end{proof}

Next,  for $\beta\leq 1$ denote $\|\cdot\|_{C^\beta}=\|\cdot\|_\beta$ (with $C^0$ denoting the space of continuous functions). For $\beta>1$ denote by $C^\beta$ the space of all functions $g$ which are $[\beta]$-times differentiable and whose $[\beta]$-th derivative is of class $C^{\beta-[\beta]}$. In that case, let
$$
\|g\|_{C^\beta}=\|g\|_{C^{[\beta]}}+\|D^{[\beta]}g\|_{C^{\beta-[\beta]}}.
$$

We need the following result.
\begin{lemma}
Suppose that for all inverse branches $y$ of $T_{\omega,\epsilon}$ we have $\|Dy\|_{\infty}\leq \gamma^{-1}$, for some constant $\gamma>1$.
 Take some $\alpha\leq 1$ and $\beta>\alpha$ so that $(\om,\epsilon,x)\to T_{\omega,\epsilon}(x)$ is of class $C^{\beta}$.
 In particular,
$$
d_{C^\beta}(T_{\omega,\epsilon},T_{\omega'})\leq C(|\epsilon|+d(\om,\om')).
$$
Then \eqref{triple} holds with $\|\cdot\|_w=\|\cdot\|_{C^\alpha}$, $\|\cdot\|_{s}=\|\cdot\|_{C^\beta}$, and $\zeta=\beta-\alpha$.   
\end{lemma}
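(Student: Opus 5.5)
We have to show that for $g\in C^\beta$ and every $\omega,\omega',\epsilon$,
\[
\|\mathcal L_{\omega,\epsilon}g-\mathcal L_{\omega'}g\|_{C^\alpha}\le C(d(\omega,\omega')^{\beta-\alpha}+|\epsilon|^{\beta-\alpha})\|g\|_{C^\beta}.
\]
The approach is an interpolation argument between a $C^0$ estimate of order $\beta$ (or order one if $\beta\ge 1$) and a uniform bound in a higher Hölder norm. Throughout we write
\[
\Delta:=|\epsilon|+d(\omega,\omega')
\]
and the target is $\|(\mathcal L_{\omega,\epsilon}-\mathcal L_{\omega'})g\|_{C^\alpha}\lesssim \Delta^{\beta-\alpha}\|g\|_{C^\beta}$. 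We use the local inverse-branch representation
\[
\mathcal L_{\omega,\epsilon}g(x)=\sum_{y}J(y(x))\,g(y(x)).
\]

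\textbf{Step 1: pairing and regularity of the branches.} Fix $x$ and a small ball around it. Since $(\omega,\epsilon,x)\mapsto T_{\omega,\epsilon}(x)$ is $C^\beta$ and uniformly expanding ($\|Dy\|_\infty\le\gamma^{-1}$), the implicit function theorem on this ball gives a pairing of the inverse branches $y_{\omega,\epsilon}$ of $T_{\omega,\epsilon}$ with the branches $y_{\omega'}$ of $T_{\omega'}$. For each paired couple we want two bounds, uniformly in the parameters:
\[
\|y_{\omega,\epsilon}-y_{\omega'}\|_{C^{\beta'}}\lesssim\Delta\ \text{ in a suitable lower norm},\qquad \|y_{\omega,\epsilon}\|_{C^\beta}\lesssim 1 .
\]
We want the same two bounds for the Jacobian weights $J\circ y$ (with one derivative lost), or equivalently for $1/|\det DT|\circ y$. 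Finitely many charts and the bounded degree $k\le D$ make these local estimates global.

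\textbf{Step 2: the $C^0$ bound.} Write each term of the difference as
\[
J_\epsilon(y_\epsilon)\bigl(g(y_\epsilon)-g(y_0)\bigr)+\bigl(J_\epsilon(y_\epsilon)-J_0(y_0)\bigr)g(y_0).
\]
This yields
\[
\|(\mathcal L_{\omega,\epsilon}-\mathcal L_{\omega'})g\|_{C^0}\lesssim \Delta^{\min(\beta,1)}\|g\|_{C^\beta}.
\]

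\textbf{Step 3: the high-norm bound and interpolation.} The difference is uniformly bounded in a higher Hölder norm, of order $\min(\beta,1)$ or higher:
\[
\|(\mathcal L_{\omega,\epsilon}-\mathcal L_{\omega'})g\|_{C^{\min(\beta,1)}}\lesssim\|g\|_{C^\beta},
\]
and this also follows from Step 1. When $\beta\le1$, interpolating $\mathrm{var}_\alpha\le 2\|u\|_{C^0}^{1-\alpha/\beta}\mathrm{var}_\beta(u)^{\alpha/\beta}$ gives $\Delta^{\beta-\alpha}$, exactly the exponent $\zeta=\beta-\alpha$. When $\beta>1$, one applies the same reasoning to the derivative of the difference: it is $O(\Delta^{\beta-1})$ in $C^0$, and this together with the $C^0$ bound $O(\Delta)$ gives the $C^\alpha$ bound $O(\Delta^{\beta-\alpha})$ or better. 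Since $\Delta$ is bounded, a larger exponent is harmless.

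\textbf{Main obstacle.} The delicate point is Step 1: controlling the Hölder seminorm of the composition difference $g\circ y_\epsilon-g\circ y_0$ and of the Jacobian difference when the data are only $C^\beta$ with noninteger $\beta$. Here a composition estimate of the form
\[
\|g\circ a-g\circ b\|_{C^0}\le \|g\|_{C^\beta}\|a-b\|_{C^0}^{\min(\beta,1)}
\]
is needed, together with care that the Jacobian has one derivative less. This is handled by using $\Delta$-closeness of the branches in $C^{\beta-1}$ (when $\beta>1$), or by absorbing the Jacobian into $\zeta=\beta-\alpha$ under the standing assumption that $\ln J$ is regular enough. I would first verify the case $\beta\le 1$ cleanly and then bootstrap to the case $\beta>1$ derivative by derivative.
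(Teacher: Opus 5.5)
\textbf{There is a genuine gap in Step 3, in the case $\alpha<1<\beta$, which is the substantive case.} Your case $\beta\le 1$ is fine. It does need weights $J\circ y$ that are uniformly $C^\beta$ and Lipschitz in the parameters, but the paper assumes this tacitly too, when it invokes a uniform $C^{\beta+1}$ bound on $\mathcal L_{\omega,\epsilon}\mathbf 1$.

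Write $u=(\mathcal L_{\omega,\epsilon}-\mathcal L_{\omega'})g$. The two bounds you combine, $\|u\|_{C^0}\lesssim\Delta$ and $\|Du\|_{C^0}\lesssim\Delta^{\beta-1}$, only yield
\[
\var_\alpha(u)\lesssim \|u\|_{C^0}^{1-\alpha}\|Du\|_{C^0}^{\alpha}\lesssim \Delta^{1-\alpha+\alpha(\beta-1)},
\qquad
\bigl(1-\alpha+\alpha(\beta-1)\bigr)-(\beta-\alpha)=(1-\alpha)(1-\beta)<0 .
\]
So the exponent is strictly \emph{worse} than $\zeta=\beta-\alpha$, not ``or better''. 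For $\alpha=1/2$, $\beta=3/2$ you get $\Delta^{3/4}$ instead of $\Delta$. No sharper interpolation of these data can repair this. The function $u(x)=\Delta\sin(\Delta^{\beta-2}x)$ satisfies both bounds, and even $\|u\|_{C^\beta}=O(1)$ for $1<\beta<2$, yet $\var_\alpha(u)\asymp\Delta^{1-\alpha+\alpha(\beta-1)}$.

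\textbf{The missing ingredient} is that the difference loses exactly one derivative while being of full order $\Delta$:
\[
\|(\mathcal L_{\omega,\epsilon}-\mathcal L_{\omega'})g\|_{C^{\beta-1}}\lesssim\Delta\|g\|_{C^\beta}.
\]
For the composition part this follows in a chart from $g\circ y_\epsilon-g\circ y_0=\int_0^1 Dg(y_t)\,(y_\epsilon-y_0)\,dt$ with $y_t=y_0+t(y_\epsilon-y_0)$. Here $Dg\circ y_t$ is bounded in $C^{\beta-1}$ and $\|y_\epsilon-y_0\|_{C^1}\lesssim\Delta$. For the weights you need $\|J_\epsilon\circ y_\epsilon-J_0\circ y_0\|_{C^{\beta-1}}\lesssim\Delta$ together with a uniform $C^\beta$ bound. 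This requires more smoothness of $T$ than the stated $C^\beta$.

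Then split scales:
\begin{itemize}
\item for $|x-x'|\le\Delta$, use $|u(x)-u(x')|\lesssim\Delta^{\beta-1}|x-x'|$, which you already have;
\item for $|x-x'|>\Delta$, use the new bound $|u(x)-u(x')|\lesssim\Delta|x-x'|^{\beta-1}$.
\end{itemize}
Because $\beta-1\le\alpha\le 1$, both give $\Delta^{\beta-\alpha}|x-x'|^\alpha$. This is exactly where $\beta-\alpha\le 1$ enters. That restriction is implicit in the lemma anyway, since $\zeta\in(0,1]$ in \eqref{triple} and the difference is generically of exact order $\Delta$ (e.g.\ for $g=\mathbf 1$).
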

\begin{proof}
Write 
$$
\mathcal L_{\omega,\epsilon}g(x)=\sum_{i}J(y_{\om,\epsilon,i}(x))g(y_{\om,\epsilon,i}(x))
$$
where $y_{\om,\epsilon,i}$ are the inverse branches of $T_{\omega,\epsilon}$ and $J(y)$ is the Jacobian of a function $y$. Then the lemma follows since 
$$
d_{C^\beta}(y_{\om,\epsilon,i},y_{\om',0,i})\leq C'(|\epsilon|+d(\om,\om'))
$$
and $\|\mathcal L_{\omega,\epsilon}\textbf{1}\|_{C^{\beta+1}}$ is uniformly bounded (note that $\mathcal L_{\omega,\epsilon}\textbf{1}=\sum_iJ(y_{\om,\epsilon,i})$). 
\end{proof}

Finally, we need:
\begin{lemma}
Suppose that for all inverse branches $y$ of $T_{\omega,\epsilon}$ we have $\|Dy\|_{\infty}\leq \gamma^{-1}$, for some constant $\gamma>1$.
 Take some $\alpha_0>0$ and $\beta>\alpha_0$ and suppose 
 that $(\om,\epsilon,x)\to T_{\omega,\epsilon}(x)$ is of class $C^{\beta+1}$. In particular,
 $$
d_{C^{\beta+1}}(T_{\omega,\epsilon},T_{\omega'})\leq C(|\epsilon|+d(\om,\om')).
$$
Then \eqref{816} holds with $\|\cdot\|_s=\|\cdot\|_{C^{\alpha_0}}$, $\|\cdot\|_{ss}=\|\cdot\|_{C^{\beta+1}}$, and $\zeta=\beta-\alpha_0$.   
\end{lemma}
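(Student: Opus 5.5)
We have to show that, for every function $\gamma\colon I\to\Omega$ differentiable at $0$, there is an operator $\frac{\partial \mathcal L_{\gamma(\epsilon),\epsilon}}{\partial\epsilon}\big|_{\epsilon=0}$ with the expansion \eqref{816}. We work directly with the inverse-branch representation
\[
\mathcal L_{\omega,\epsilon}g(x)=\sum_i J(y_{\omega,\epsilon,i}(x))\,g(y_{\omega,\epsilon,i}(x)).
\]
Since every inverse branch contracts uniformly, $\|Dy\|_\infty\le\gamma^{-1}$, the implicit function theorem applied to $T_{\omega,\epsilon}(y)=x$ shows that $(\omega,\epsilon,x)\mapsto y_{\omega,\epsilon,i}(x)$ is of class $C^{\beta+1}$, with bounds uniform in $i$. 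The same holds, with one derivative less, for the Jacobians: $(\omega,\epsilon,x)\mapsto J(y_{\omega,\epsilon,i}(x))$ is of class $C^{\beta}$. For a curve $\gamma$ we set $\omega_\epsilon=\gamma(\epsilon)$ and look at $t\mapsto(\omega_t,t)$. The natural candidate for the derivative is the formal $t$-derivative at $t=0$:
\[
\Lambda g=\sum_i\Big[\partial_t\big(J(y_{\omega_t,t,i})\big)\big|_{t=0}\,g\circ y_{\omega_0,0,i}+J(y_{\omega_0,0,i})\,(Dg\circ y_{\omega_0,0,i})\,\partial_t y_{\omega_t,t,i}\big|_{t=0}\Big].
\]
Here $\partial_t$ means the total derivative, $\partial_\epsilon+D_\omega(\cdot)\gamma'(0)$. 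This $\Lambda$ loses one derivative, so it maps $C^{\beta+1}$ into $C^{\beta}\subset C^{\alpha_0}$. Its image lies in $\mathcal B_s^0$ because $\int\mathcal L_{\omega,\epsilon}g\,dm=\int g\,dm$ for all $(\omega,\epsilon)$; differentiating this identity gives $\int\Lambda g\,dm=0$.

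To obtain the remainder estimate, we first replace the curve $\gamma$ by the straight segment in the chart (on $\Omega$ we work locally in coordinates). We write the left side of \eqref{816} as a Taylor remainder in the joint variable $(\omega,\epsilon)$:
\[
(\mathcal L_{\omega_\epsilon,\epsilon}-\mathcal L_{\omega_0,0})g-\Lambda(\epsilon,\omega_\epsilon-\omega_0)g=\int_0^1\big[D\mathcal L_{(\omega_0,0)+s(\omega_\epsilon-\omega_0,\epsilon)}-D\mathcal L_{(\omega_0,0)}\big](\omega_\epsilon-\omega_0,\epsilon)\,g\,ds .
\]
Here $D\mathcal L_{(\omega,\epsilon)}$ denotes the formal derivative operator, defined as above but with respect to both $\omega$ and $\epsilon$, and $\Lambda(\epsilon,\omega_\epsilon-\omega_0)=D\mathcal L_{(\omega_0,0)}(\omega_\epsilon-\omega_0,\epsilon)$ is the corresponding linear part.

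The key step, and the main obstacle, is the Hölder continuity estimate
\[
\|(D\mathcal L_{(\omega,\epsilon)}-D\mathcal L_{(\omega',\epsilon')})\,g\|_{C^{\alpha_0}}\le C\big(d(\omega,\omega')+|\epsilon-\epsilon'|\big)^{\beta-\alpha_0}\|g\|_{C^{\beta+1}} .
\]
Each term of $D\mathcal L$ is a product and composition of three ingredients: $C^\beta$ functions of $(\omega,\epsilon,x)$ (the Jacobians and the derivatives $\partial y$), the function $g$, and $Dg\in C^\beta$ composed with branches that depend $C^{\beta+1}$ on the parameter. We therefore need an interpolation estimate of the form
\[
\|F(p,\cdot)-F(p',\cdot)\|_{C^{\alpha_0}}\le C|p-p'|^{\beta-\alpha_0}\|F\|_{C^\beta},
\]
which follows from the standard inequality $\|u\|_{C^{\alpha_0}}\lesssim\|u\|_{C^0}^{1-\alpha_0/\beta}\|u\|_{C^\beta}^{\alpha_0/\beta}$ together with the bound $\|F(p,\cdot)-F(p',\cdot)\|_{C^0}\le C|p-p'|^{\min(\beta,1)}\|F\|_{C^\beta}$. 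When $\beta\le1$ this yields exactly the exponent $\beta-\alpha_0$. When $\beta>1$ one applies the argument to derivatives of order $[\beta]$ and takes the minimum with the exponent $1$ coming from the difference quotient in $C^0$. We also need a composition estimate, $\|u\circ y-u\circ y'\|_{C^{\alpha_0}}\lesssim\|u\|_{C^{\beta}}\,\|y-y'\|^{\beta-\alpha_0}_{C^{1}}$, applied with $u=Dg$. Because the branches are uniform contractions, compositions with them are uniformly bounded on Hölder spaces; this is what keeps the constants uniform in $i$. The number of branches is at most $D$, or the series converges via $\sum_i J$ bounded in $C^{\beta+1}$, as in the previous lemma.

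Inserting the Hölder estimate into the integral remainder, the remainder is bounded by
\[
C\big(|\epsilon|+d(\omega_\epsilon,\omega_0)\big)^{1+\beta-\alpha_0}\|g\|_{C^{\beta+1}}\le C'\big(|\epsilon|^{1+\zeta}+d(\omega_\epsilon,\omega_0)^{1+\zeta}\big)\|g\|_{C^{\beta+1}},
\]
with $\zeta=\beta-\alpha_0$. Finally we replace the linearization $\Lambda(\epsilon,\omega_\epsilon-\omega_0)$ by $\epsilon\,\partial_\epsilon\mathcal L_{\gamma(\epsilon),\epsilon}|_{0}$, that is, we set $\frac{\partial\mathcal L_{\gamma(\epsilon),\epsilon}}{\partial\epsilon}\big|_{\epsilon=0}:=\Lambda(1,\gamma'(0))$. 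The error made is $D_\omega\mathcal L\,(\omega_\epsilon-\omega_0-\epsilon\gamma'(0))$, which is $o(\epsilon)$ by differentiability of $\gamma$. In the applications of Theorem~\ref{T3} the curves are $\gamma(\epsilon)=\sigma_\epsilon^{-(j+1)}\omega$, and this last term is absorbed under the natural interpretation that $\gamma$ satisfies the corresponding second-order expansion. Otherwise one simply keeps the linear term $\Lambda(\epsilon,\omega_\epsilon-\omega_0)$ in \eqref{816}.
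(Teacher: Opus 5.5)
Your route is the paper's route: the inverse-branch formula, then a joint Taylor expansion in $(\omega,\epsilon)$, with the remainder controlled by H\"older continuity of the derivative operator from $C^{\beta+1}$ to $C^{\alpha_0}$. However, the key H\"older estimate is off by one derivative.

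You correctly note that $(\omega,\epsilon,x)\mapsto J(y_{\omega,\epsilon,i}(x))$ is only $C^{\beta}$ when $T$ is $C^{\beta+1}$. But the first term of $\Lambda$, and of $D\mathcal L_{(\omega,\epsilon)}$, contains the \emph{parameter derivative} $\partial_t\big(J(y_{\omega_t,t,i})\big)$, which is then only $C^{\beta-1}$. It is not among the $C^\beta$ ingredients you list (the Jacobians and $\partial y$). Hence $\Lambda$ does not map $C^{\beta+1}$ into $C^\beta$. Your interpolation argument applied to this term only gives $\|(D\mathcal L_p-D\mathcal L_{p'})g\|_{C^{\alpha_0}}\lesssim |p-p'|^{\beta-1-\alpha_0}\|g\|_{C^{\beta+1}}$, i.e.\ $\zeta=\beta-1-\alpha_0$, and no positive exponent at all when $\beta\le 1+\alpha_0$.

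This is not an artifact of the method. Take $T_\epsilon(x)=2x+\phi(x+\epsilon)$ on the circle, with $\phi$ of class $C^{\beta+1}$ and $\|\phi'\|_\infty$ small. Then $\Lambda\mathbf 1=-\sum_i 2\phi''(y_i)/(2+\phi'(y_i))^3$, which in general is no better than $C^{\beta-1}$. So for $\beta<1+\alpha_0$ no operator into $C^{\alpha_0}$ can satisfy \eqref{816} in this example.

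The paper's sketch obtains $\zeta=\beta-\alpha_0$ by invoking a uniform $C^{\beta+1}$ bound on $\mathcal L_{\omega,\epsilon}\mathbf 1=\sum_i J(y_{\omega,\epsilon,i})$. In effect it credits the weights with $\beta+1$ derivatives, which strictly needs $T$ of class $C^{\beta+2}$. You quote that same bound for summing over branches, but it contradicts your own (correct) $C^\beta$ count, and you never use it in the step where it is needed. To close the argument you must do one of two things:
\begin{enumerate}
\item Assume the extra regularity of the Jacobians (e.g.\ $T\in C^{\beta+2}$). Your estimate then goes through verbatim with $\zeta=\beta-\alpha_0$.
\item Settle for $\zeta=\beta-1-\alpha_0$, under the assumption $\beta>1+\alpha_0$.
\end{enumerate}

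Your closing caveat about $\gamma$ is correct, and it is not a shortfall relative to the paper. If $\mathcal L_\omega$ genuinely depends on $\omega$, \eqref{816} with $\epsilon$ times a fixed operator fails for a curve that is merely differentiable at $0$. An example, in a chart, is $\gamma(\epsilon)=\omega_0+\epsilon v+\epsilon|\ln|\epsilon||^{-1}w$. The paper's ``Taylor expansion as a function of $\epsilon$'' tacitly assumes, as you do, that $\gamma(\epsilon)=\gamma(0)+\epsilon\gamma'(0)+O(|\epsilon|^{1+\zeta})$. Keeping the linear term $\Lambda(\epsilon,\omega_\epsilon-\omega_0)$ is the right general statement, but it is a modification of \eqref{816}, not a proof of it.
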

\begin{proof}
Write 
$$
\mathcal L_{\gamma(\epsilon),\epsilon}g(x)=\sum_{i}J(y_{\gamma(\epsilon),\epsilon,i}(x))g(y_{\gamma(\epsilon),\epsilon,i}(x)).
$$
 Then
$$
d_{C^{\beta+1}}(y_{\om,\epsilon,i},y_{\om',0,i})\leq C'(|\epsilon|+d(\om,\om'))
$$
and $\|\mathcal L_{\omega,\epsilon}\textbf{1}\|_{C^{\beta+1}}$ is uniformly bounded. Thus, the lemma follows by Taylor expansion when viewed as a function of $\epsilon$. Here we used the fact that if $G:Q\to E$ is of class $C^{r+q}$ for a manifold $Q$, a Banach space $E$, $r\in\mathbb N$ and $q\in(0,1]$, then the Taylor reminder of order $r$ is of size $O(\delta^{r+q})$ when perturbing a point $q_1\in Q$ by a point $q_2\in Q$ such that $\text{dist}(q_1,q_2)\leq \delta$.  
\end{proof}

Next, let us discuss the conditions of Theorem \ref{RegThm}.
We first have the following result whose proof proceeds similarly to the previous lemma.
\begin{lemma}
Suppose that for all inverse branches $y$ of $T_{\omega,\epsilon}$ we have $\|Dy\|_{\infty}\leq \gamma^{-1}$, for some constant $\gamma>1$.
 Take some $\alpha_0>0$, $s\in\mathbb N$ and $\beta>\alpha_0$ and suppose 
 that $(\om,\epsilon,x)\to T_{\omega,\epsilon}(x)$ is of class $C^{\beta+s}$. 
Then $(\om,\epsilon)\to\mathcal L_{\omega,\epsilon}$ is of class $C^s$ when viewed as a map to $\text{Hom}(C^{\beta+s}(M),C^{\alpha_0}(M))$.  
\end{lemma}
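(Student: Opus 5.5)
We will write $\mathcal L_{\omega,\epsilon}$ through its inverse branches, $\mathcal L_{\omega,\epsilon}g(x)=\sum_i J(y_{\omega,\epsilon,i}(x))\,g(y_{\omega,\epsilon,i}(x))$, and study the dependence on $(\omega,\epsilon)$ of the branches $y_{\omega,\epsilon,i}$ and of the weights $J(y_{\omega,\epsilon,i})$. The proof has two ingredients. First, the branches are regular in all variables. Second, composition $g\mapsto g\circ y$ is differentiable as a map $C^{\beta+s}\to C^{\alpha_0}$ in the parameters, with a loss of one derivative of $g$ for each parameter derivative.

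\textbf{Step 1: the branches.} Fix a point $(\omega_0,0,x_0)$ and a preimage $y_0$ of $x_0$ under $T_{\omega_0}$. The equation $T_{\omega,\epsilon}(y)=x$ is of class $C^{\beta+s}$ jointly in $(\omega,\epsilon,y,x)$. Its derivative in $y$, namely $DT_{\omega,\epsilon}$, is invertible with inverse bounded by $\gamma^{-1}$, uniformly. The implicit function theorem therefore gives that $(\omega,\epsilon,x)\mapsto y_{\omega,\epsilon,i}(x)$ is of class $C^{\beta+s}$ locally.

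The expansion (local injectivity on $\xi$-balls) and compactness of $M$ and $\Omega$ make these local branches glue into finitely many branches. The bounds are uniform, since the implicit function bounds depend only on $\gamma$ and on $C^{\beta+s}$ norms of $T$. The Jacobian $J(y_{\omega,\epsilon,i})$ involves one derivative in $x$, so it is jointly of class $C^{\beta+s-1}$. Moreover, every mixed derivative $\partial^k_{(\omega,\epsilon)}$ of order $k\le s$ of it is of class $C^{\beta+s-1-k}\subset C^{\alpha_0}$ in $x$, with uniform bounds.

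\textbf{Step 2: differentiating the composition.} For $g\in C^{\beta+s}(M)$, set $F(\omega,\epsilon)=g\circ y_{\omega,\epsilon,i}$. By the Fa\`a di Bruno formula, $\partial^k_{(\omega,\epsilon)}F$ for $k\le s$ is a finite sum of terms of the form
\[
(D^jg)\circ y_{\omega,\epsilon,i}\cdot\prod_l \partial^{k_l}y_{\omega,\epsilon,i}, \qquad j\le k,\ \sum_l k_l=k .
\]
Each such term lies in $C^{\beta+s-k}\subset C^{\alpha_0}$, and its $C^{\alpha_0}$ norm is bounded by $C\|g\|_{C^{\beta+s}}$. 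Multiplying by the weights and using the Leibniz rule, we obtain that the candidate $k$-th derivative $\mathcal L^{(k)}_{\omega,\epsilon}$ is a bounded operator from $C^{\beta+s}$ to $C^{\alpha_0}$, with operator norm uniformly bounded in $(\omega,\epsilon)$.

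\textbf{Step 3: derivatives in operator norm.} This is the main obstacle: pointwise derivatives are not enough, and we need them in the norm of $\text{Hom}(C^{\beta+s},C^{\alpha_0})$. We will proceed as in the proof of the previous lemma, by a Taylor expansion with remainder. For $k<s$, write the difference $\mathcal L^{(k)}_{\omega',\epsilon'}g-\mathcal L^{(k)}_{\omega,\epsilon}g-\mathcal L^{(k+1)}_{\omega,\epsilon}(\omega'-\omega,\epsilon'-\epsilon)g$ via the integral form of the remainder. Each term of this integrand is a Fa\`a di Bruno term of order $k+1$, evaluated along the segment between the two parameters, minus its value at the base point.

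Each such term involves at most $k+1\le s$ derivatives of $g$ and of the branch parameters. The remaining $\beta-\alpha_0>0$ degrees of regularity give a H\"older modulus. We will use the interpolation fact that a family bounded in $C^{\beta'}$ and Lipschitz in the parameter with values in $C^{0}$ is H\"older in the parameter with values in $C^{\alpha_0}$ whenever $\beta'>\alpha_0$. This shows that the remainder is $o(|\omega'-\omega|+|\epsilon'-\epsilon|)$ uniformly over $\|g\|_{C^{\beta+s}}\le1$. Consequently $\mathcal L^{(k)}$ is differentiable in operator norm with derivative $\mathcal L^{(k+1)}$.

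The same interpolation applied at level $s$ gives continuity of $\mathcal L^{(s)}$ in operator norm. Summing over the finitely many branches, we conclude that $(\omega,\epsilon)\mapsto \mathcal L_{\omega,\epsilon}$ is of class $C^s$ as a map to $\text{Hom}(C^{\beta+s}(M),C^{\alpha_0}(M))$.
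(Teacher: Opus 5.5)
\textbf{The derivative count for the Jacobian weights fails, and in fact the statement as written is false for $\alpha_0<\beta<1+\alpha_0$.}

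Your route is the paper's route. Its proof of this lemma only points to the previous lemma: write $\mathcal L_{\omega,\epsilon}$ through inverse branches, use regular dependence of the branches on $(\omega,\epsilon)$, and Taylor expand. Your implicit function theorem / Fa\`a di Bruno / interpolation scheme is a detailed version of that, and the composition part $g\circ y_{\omega,\epsilon,i}$ is handled correctly.

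The gap is in the weights. You correctly note that $J(y_{\omega,\epsilon,i})=|\det DT_{\omega,\epsilon}|^{-1}\circ y_{\omega,\epsilon,i}$ is only jointly $C^{\beta+s-1}$. You then claim $\partial^k_{(\omega,\epsilon)}J(y_{\omega,\epsilon,i})\in C^{\beta+s-1-k}\subset C^{\alpha_0}$ for every $k\le s$.
\begin{itemize}
\item At $k=s$ this space is $C^{\beta-1}$, which lies in $C^{\alpha_0}$ only when $\beta\ge 1+\alpha_0$.
\item For $\beta<1$, the $s$-th parameter derivative of the weight need not even exist.
\item In Step 3, the Leibniz terms in which all parameter derivatives fall on the Jacobian have only $\beta-1-\alpha_0$ spare regularity, not $\beta-\alpha_0$. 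So your interpolation gives continuity of $\mathcal L^{(s)}$ in $\text{Hom}(C^{\beta+s},C^{\alpha_0})$ only if $\beta>1+\alpha_0$.
\end{itemize}

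No sharper estimate can close this gap, because the lemma itself fails for $\alpha_0<\beta<1+\alpha_0$. Here is a counterexample.
\begin{itemize}
\item Take $M=S^1$, no $\omega$-dependence, and $T_\epsilon(x)=T_0(x+\epsilon)-\epsilon$, i.e.\ $T_0$ conjugated by a rotation.
\item Choose $T_0$ expanding and of class $C^{\beta+s}$, with $T_0'$ failing to be $C^{s+\alpha_0}$ only near one point $p$. This is possible since $\beta+s-1<s+\alpha_0$.
\item The family is jointly $C^{\beta+s}$, its inverse branches contract exactly as those of $T_0$, and $\mathcal L_\epsilon\mathbf{1}=(\mathcal L_0\mathbf{1})(\cdot+\epsilon)$.
\item Near $T_0(p)$, only one branch of $\mathcal L_0\mathbf{1}=\sum_i 1/T_0'\circ y_{0,i}$ sees the singularity. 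Hence $\mathcal L_0\mathbf{1}\notin C^{s+\alpha_0}$.
\item But if $\epsilon\mapsto f(\cdot+\epsilon)$ is $C^s$ into $C^{\alpha_0}$, then $f\in C^{s+\alpha_0}$. So $\epsilon\mapsto\mathcal L_\epsilon\in\text{Hom}(C^{\beta+s},C^{\alpha_0})$ is not $C^s$.
\end{itemize}

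The paper's one-line proof has the same off-by-one problem. The preceding lemma asserts a uniform $C^{\beta+1}$ bound on $\mathcal L_{\omega,\epsilon}\mathbf{1}=\sum_i J(y_{\omega,\epsilon,i})$ when $T$ is only $C^{\beta+1}$.

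The fix is to assume $(\omega,\epsilon,x)\mapsto T_{\omega,\epsilon}(x)$ is of class $C^{\beta+s+1}$. Then the weights are jointly $C^{\beta+s}$, all their parameter derivatives of order at most $s$ lie in $C^{\beta}$ with $\beta>\alpha_0$, and your Steps 1--3 go through. One small adjustment remains: in the interpolation step, replace ``Lipschitz'' by ``H\"older'' for the terms containing $D^s g\in C^\beta$ when $\beta<1$.
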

Using the above lemma, we can choose chains of spaces $\mathcal B^{(i)}$ of the form $\mathcal B^{(i)}=C^{i(s+1)}(M)$ assuming that  $(\om,\epsilon,x)\to T_{\omega,\epsilon}(x)$ is of class $C^{s^2+1}$. Note that in this case, the operator norms of $ \mathcal L_{\omega,\epsilon}^n$ are uniformly bounded in $\om,\epsilon,n$ when viewed as operators on $\mathcal B^{(i)}$.

\subsubsection{Small random perturbations of an Anosov diffeomorphism}
Let $M$ denote a $C^\infty$ compact and connected Riemannian manifold. Furthermore, let $T$ be
a transitive Anosov diffeomorphism on $M$ of class $C^{t+1}$
for $t>1$. By $\mathcal L_T$ we denote 
the transfer operator associated with $T$. We recall that the action of $\mathcal L_T$ on smooth functions $h\in C^t(M,\mathbb R)$ is given by
$$
 \mathcal L_T h=\bigg{(}\frac{h} {\lvert \det T\rvert}\bigg{)}\circ T^{-1}.
$$
 For $\Delta>0$,  let $B_{C^{t+1}}(T, \Delta)$ denote the ball of radius $\Delta$ centered at $T$ in the $C^{t+1}$-topology. It is well-known that $B_{C^{t+1}}(T, \Delta)$ consists of Anosov diffeomorphisms provided that $\Delta$ is sufficiently small.

 Let $\Omega$ be a compact Riemannian manifold and $I\subset \mathbb R$ be an open interval that contains $0$. Furthermore, take $s>1$ and consider a mapping $\mathbf T\colon \Omega \times I \to C^{t+1}(M, M)$ of class $C^s$ such that $\mathbf T(\omega, 0)\in B_{C^{t+1}}(T, \Delta)$ for $\omega \in \Omega$. Set
 \[
 T_{\omega, \epsilon}:=\mathbf T(\omega, \epsilon), \quad (\omega, \epsilon)\in \Omega \times I.
 \]
Note that by shrinking $I$ if necessary, we have that  $T_{\omega, \epsilon}\in B_{C^{t+1}}(T, \Delta)$ (and thus $T_{\omega, \epsilon}$ is Anosov) for $\omega \in \Omega$ and $\epsilon \in I$.

 The Banach spaces to which our results are applicable belong to the class of anisotropic Banach spaces $\mathcal B^{p, q}$ introduced by Gou\"{e}zel and Liverani~\cite{GL}, where $p\in \mathbb N_0$,  $q>0$ and $p+q<t$. We recall that elements of $\mathcal B^{p, q}=(\mathcal B^{p, q}, \| \cdot \|_{p, q})$ are distributions of the order of most $q$. These Banach spaces are associated with our fixed transitive Anosov diffeomorphism $T$. However, the transfer operators associated with any $S\in B_{C^{t+1}}(T, \Delta)$ are well-defined and bounded linear operators on each of these spaces (provided that $\Delta$ is sufficiently small). We also recall that the action of the transfer operator $\mathcal L_S$ associated with $S$ on $\mathcal B^{p, q}$ is given by 
 \[
 (\mathcal L_Sh)(\varphi)=h(\varphi \circ S), \quad h\in \mathcal B^{p, q}, \ \varphi \in C^q(M, \mathbb C).
 \]

 In order to apply Theorem~\ref{T1}, we can take $\mathcal B_w=\mathcal B^{p, q}$ and $\mathcal B_s=\mathcal B^{p+\ell, q-\ell}$, where $p\ge 1$, $q, \ell \in \mathbb N$, $q>\ell$ and $p+q<t$. For example, if $t>3$ we can choose $\mathcal B_w=\mathcal B^{1, 2}$ and $\mathcal B_s=\mathcal B^{2, 1}$.
 The functional $\psi$ is given by $\psi (h)=h(1)$, where $h(1)$ denotes the evaluation of a distribution $h$ in a constant function $1$.
 Then, shrinking $I$  and $\Delta$ if necessary, we have that all the assumptions of Theorem~\ref{T1} hold with $\Omega_\epsilon=\Omega$. Let us provide the details.  The arguments in the proof of~\cite[Lemma 14]{DS} give that 
 \[
 d_{C^{r+1}}(T_{\omega, \epsilon}, T_{\omega', 0})\le C(d(\omega, \omega')+|\epsilon|),
 \]
 for $\omega \in \Omega$ and $\epsilon \in I$, where $C>0$ is independent of these. This together with~\cite[Lemma 7.1]{GL} implies that~\eqref{triple} holds. The same argument gives that 
 \begin{equation}\label{small1}
 \|(\mathcal L_{\omega, \epsilon}-\mathcal L_T)h\|_{p, q}\le Cd_{C^{r+1}}(T_{\omega, \epsilon}, T)\|h\|_{p+1, q-1}\le C\Delta \|h\|_{p+\ell, q-\ell} \quad \text{for $h\in \mathcal B_s$,}
 \end{equation}
 and 
 \begin{equation}\label{small2}
  \|(\mathcal L_{\omega, \epsilon}-\mathcal L_T)h\|_{p-1, q+1}\le  C\Delta \|h\|_{p, q} \quad \text{for $h\in \mathcal B_w$.}
  \end{equation}
  Due to~\eqref{small2} and 
  since $\mathcal L_T$ has a spectral gap on $\mathcal B_w$ (see~\cite[Theorem 2.3]{GL}), \eqref{dec} follows from~\cite[Proposition 2.10]{CR}. Here we also use that we have uniform Lasota-Yorke inequalities for compositions of transfer operators associated with Anosov maps in $B_{C^{r+1}}(T, \Delta)$ like in~\cite[Eq.(10) and (11)]{TAMS} with $\|\cdot \|_{p-1, q+1}$ instead of $\|\cdot \|_{0, 2}$ and $\|\cdot \|_{p, q}$ instead of $\|\cdot \|_{1, 1}$, respectively.  In addition, the same reasoning gives that~\eqref{dec} is valid with $\|\cdot \|_s$ instead of $\|\cdot \|_w$. This together with~\cite[Proposition 7]{DS} gives for each $\epsilon\in I$ the existence of a (measurable) family $(h_{\omega, \epsilon})_{\omega \in \Omega}\subset \mathcal B_s$ that satisfies the assumptions of Theorem~\ref{T1}. In addition, $h_{\omega, \epsilon}$ is a Borel probability measure on $M$ (see the proof of~\cite[Proposition 3.3]{TAMS}). We also note that~\eqref{distrib} holds for $r= q$.

  Let us discuss the applicability of Theorem~\ref{T3}. For this, we can take $\mathcal B_w=\mathcal B^{p, q}$, $\mathcal B_s=\mathcal B^{p+\ell_1, q-\ell_1}$, and $\mathcal B_{ss}=\mathcal B^{p+\ell_1+\ell_2, q-\ell_1-\ell_2}>0$, where  $p\ge 1$, $q, \ell_1, \ell_2\in \mathbb N$, $\ell_2\ge 2$, $q-\ell_1-\ell_2>0$, and $p+q<t$. For example, if $t>5$ we can choose $\mathcal B_w=\mathcal B^{1, 4}$, $\mathcal B_s=\mathcal B^{2, 3}$, and $\mathcal B_{ss}=\mathcal B^{4, 1}$.
  The assumptions in the statement of Theorem~\ref{T3} that come from Theorem~\ref{T1} can be verified using the same argumentation as in the previous paragraph. Moreover, \eqref{9:29} follows from the strong Lasota-Yorke inequality (as in~\cite[Eq. (11)]{TAMS}) in the pair of norms $(\|\cdot \|_w, \|\cdot \|_s)$. Moreover, for each $\epsilon \in I$, $(h_{\omega, \epsilon})_{\omega \in \Omega}\subset \mathcal B_{ss}$, and we have a uniform bound for $\|h_{\omega, \epsilon}\|_{ss}$ (see~\cite[Proposition 7]{DS}). Finally, \eqref{816} with $\xi=1$ follows by Taylor expansion of $\mathcal L_{\gamma(\epsilon), \epsilon}$ with respect to $\epsilon$ and using the arguments as in the proof of~\cite[Theorem 1]{DS}.

  Let us discuss the applicability of Theorem~\ref{RegThm} in this context. 
  We choose a finite sequence of anisotropic Banach spaces $\mathcal B^{(i)}$, $1\le i \le s$ as follows:  set $\mathcal B^{(1)}:=\mathcal B^{p, q}$, and inductively, if $\mathcal B^{(i)}=\mathcal B^{p_i, q_i}$, then set
 $\mathcal B^{(i+1)}:=\mathcal B^{p_i+r+2, q_i-r-2}$ for $1\le i<s$.  Note that 
  \[
  p_i=(i-1)(r+2)+p \quad \text{and} \quad q_i=q-(i-1)(r+2),
  \]
  for $1\le i \le s$.  Here, $p\ge 1$, $q\in \mathbb N$ with $q>(s-1)(r+2)$ and $p+q<t$.
  We note that ``between'' $\mathcal B^{(i)}$ and $\mathcal B^{(i+1)}$ (including these) we have $r+3$ anisotropic Banach spaces $\mathcal B^{p_i+k, q_i-k}$ for $0\le k \le r+2$. Writing $\omega$ in local coordinates $x\in \mathbb R^d$, $d=\dim M$, and Taylor expanding $\mathcal L_{\omega, \epsilon}$ with respect to $(\omega, \epsilon)$, the arguments as in~\cite[Section 4]{CN} or~\cite[Section 8]{GL} yield (provided $\mathbf T$ is of class $C^{r+2}$) that  $\mathcal L_{\omega, \epsilon}$ is $(r+1)$-times differentiable as a function of $(\omega, \epsilon)$ to $\text{Hom}(\mathcal B^{(i)},\mathcal B^{(i-1)})$ for $1\le i \le r$.
  In particular, it is of class $C^r$.




\subsection{The base maps $\sigma_\epsilon$}
\paragraph{\textbf{Statistical stability.}}
The first condition in \eqref{1211} holds when $\mathfrak F$ is the class of $C^{s}$ functions (for some $s>0$) when $\mathbb P_\epsilon$ are SRB measures of the $C^\infty$ partially hyperbolic maps $\sigma_\epsilon$ considered in \cite{Dol}, see \cite[Theorem 1]{Dol}. In fact, the measures $\mathbb P_\epsilon$ are differential in $\epsilon$.  The second condition in \eqref{1211} just means that that $\sigma_\epsilon^{-1}$ is continuous in $\epsilon$ in the uniform distance, which holds true in the setting of \cite{Dol}.



\paragraph{\textbf{Linear response.}}
The conditions of Theorem \ref{T4} hold when 
 $\sigma_\epsilon(\omega)=g_{\epsilon}\omega$ is a group rotation and $\epsilon\to g_{\epsilon}$ is differentiable in $\epsilon$. Here $\mathbb P_\epsilon=\mathbb P$ is the Haar measure and we work with a left invariant metric, so that all $\sigma_\epsilon^{-1}$ are isometries, and in particular \eqref{Lipp} holds. Since $\mathbb P_\epsilon=\mathbb P$,  we can take any space $\mathfrak F$ such that $\|f\|_{L^1(m)}\leq \|f\|_{\mathfrak F}$ and then take functions $\Phi$ such that the functions $\theta,\varphi,\phi$ in theorems \ref{T2} and \ref{T4} are well defined and belong to $\mathfrak F$. Note that $R$ in Theorem \ref{T2} is finite, since $\|f\|_{L^1(m)}\leq \|f\|_{\mathfrak F}$.

Another example are the small perturbations of circle rotations considered in \cite{GaSo}. Then \eqref{Lipp} holds trivially since $\sigma^{-1}$ is an isometry. Moreover, in the circumstances of  \cite{GaSo} we can take $\mathbb  P_\epsilon(dx)=\rho_\epsilon(x) dx$ such that $(\rho_\epsilon-1)/\epsilon$ converges uniformly (possibly when restricted to a Cantor subset of $I$) to the density $\gamma_0$ of the limiting measure $\mathbb P_{0}'=\gamma_0dx$ as $\epsilon\to 0$. Thus, we can take $\mathfrak F$ to be the class of all bounded measurable functions equipped with the supremum norm.





 \section*{Acknowledgments}
Funded by the European Union-NextGenerationEU-StatDinMatAn-uniri-iz-25-108.

\end{document}